\newcommand{\R}{\mathbb{R}}
\newtheorem{thm}{Theorem}
\numberwithin{thm}{section}
\newtheorem{cor}[thm]{Corollary}
\newtheorem{lem}[thm]{Lemma}
\newtheorem{prop}[thm]{Proposition}
\theoremstyle{definition}
\newtheorem*{rep@theorem}{\rep@title}
\newcommand{\newreptheorem}[2]{%
\newenvironment{rep#1}[1]{%
 \def\rep@title{#2 \ref{##1}}%
 \begin{rep@theorem}}%
 {\end{rep@theorem}}}
\begin{document}

\title[S. Das, H. Lang, H. Wan, and N. Xu]{The Distribution of Error Terms of Smoothed Summatory Totient Functions}

\author[S.~Das]{Sanjana Das}
\address{Department of Mathematics \\ 
Massachusetts Institute of Technology \\ 
77 Massachusetts Avenue \\
Cambridge, MA 02139}
\email{\href{mailto:sanjanad@mit.edu}{sanjanad@mit.edu}}

\author[H.~Lang]{Hannah Lang}
\address{Department of Mathematics \\
Harvard University \\
1 Oxford Street \\
Cambridge, MA 02138}
\email{\href{mailto:hlang@college.harvard.edu}{hlang@college.harvard.edu}}

\author[H.~Wan]{Hamilton Wan}
\address{Department of Mathematics \\
Yale University\\
10 Hillhouse Avenue\\
New Haven, CT 06520}
\email{\href{mailto:hamilton.wan@yale.edu}{hamilton.wan@yale.edu}}

\author[N.~Xu]{Nancy Xu}
\address{Department of Mathematics \\
Princeton University \\
304 Washington Road \\
Princeton, NJ 08544}
\email{\href{mailto:nancyx@princeton.edu}{nancyx@princeton.edu}}

\begin{abstract}
    We consider the summatory function of the totient function after applications of a suitable smoothing operator and study the limiting behavior of the associated error term. Under several conditional assumptions, we show that the smoothed error term possesses a limiting logarithmic distribution through a framework consolidated by Akbary--Ng--Shahabi. To obtain this result, we prove a truncated version of Perron's inversion formula for arbitrary Riesz typical means. We conclude with a conditional proof that at least two applications of the smoothing operator are necessary and sufficient to bound the growth of the error term by $\sqrt{x}$. 
\end{abstract}

\maketitle

\section{Introduction}

For each positive integer $n$, the Euler totient function is defined as
\[\phi(n) := n \prod_{\substack{p\mid n \\ p \text{ prime}}} \left(1 - \frac{1}{p}\right).\] Consider the associated summatory function: for any $x \geq 1$, \[F(x) = \sum_{n \leq x} \phi(n).\] Classical methods show that \[F(x) = \frac{3}{\pi^2}x^2 + R(x),\] where $R(x) = o(x^2)$ is the associated error term. The limiting behavior of $R(x)$ as $x \to \infty$ has been studied by many authors. For instance, Walfisz \cite{Wal63} has demonstrated that \[R(x) = O(x(\log x)^{2/3}(\log \log x)^{4/3})\] unconditionally. On the other hand, Montgomery \cite{Mon87} proved that $R(x)$ admits a superlinear lower bound: \[R(x) = \Omega(x\sqrt{\log \log x}).\]
Unlike in the case of many other summatory functions of arithmetic functions, such as the M\"obius function or the $k$-free indicator function, the error term $R(x)$ is not solely dictated by the zeros of the Riemann zeta function; instead, it is split into the sum of an \emph{arithmetic} error term $R^{\mathrm{AR}}(x)$ and an \emph{analytic} error term $R^{\mathrm{AN}}(x)$, where only the latter can be written as a sum over the zeros of the Riemann zeta function. Kaczorowski and Wiertelak \cite{KW10} proved under the assumption of the Riemann hypothesis that the analytic error term satisfies the bound \[R^{\mathrm{AN}}(x) = O_\varepsilon(x^{1/2 + \varepsilon})\] for any $\varepsilon > 0$; on the other hand, the arithmetic error term is much larger, with \[R^{\mathrm{AR}}(x) = \Omega_{\pm}(x\sqrt{\log \log x}).\] In this sense, the arithmetic error term is the main contributor to Montgomery's lower bound on the growth of $R(x)$. 

The existence of the arithmetic error term makes it difficult to study $R(x)$. For example, the general framework of \cite{ANS14} that was used to prove the existence of limiting logarithmic distributions for various summatory functions of arithmetic functions or their associated error terms does not apply to $R(x)$. However, as shown by Kaczorowski and Wiertelak \cite{KWsmooth10}, it is possible to make the error term more well-behaved by applying a smoothing operator. Let $\delta$ be the smoothing operator defined by \[[\delta(f)](x) := \int_0^x \frac{f(t)}{t}\, dt\] for any locally Lebesgue integrable function $f : \mathbb{R}^+ \to \mathbb{R}$ satisfying \[\int_0^1 |f(t)|\,|\log t|^N \, \frac{dt}{t} < \infty \] for every integer $N \geq 1$. Then, for each positive integer $k$, we can consider the $(k - 1)$-fold smoothed summatory function \[F_{k - 1}(x) := \frac{1}{(k - 1)!}\sum_{n \leq x} \phi(n)(\log(x/n))^{k - 1} = \frac{1}{(k - 1)!}[\delta^{k - 1}(F)](x).\] Observe that when $k = 1$, $F_0$ is precisely the summatory function $F$. We will show later that the growth of $F_{k - 1}(x)$ is dominated by the term $\frac{3}{2^{k - 1}\pi^2}x^2$. Accordingly, we define the associated smoothed error term as \[R_{k - 1}(x) := F_{k - 1}(x) - \frac{3}{2^{k - 1}\pi^2}x^2.\] After just one application of the smoothing operator $\delta$, the arithmetic error term vanishes in the sense that the smoothed error term $R_1(x)$ can be written solely as a sum over the zeros of the Riemann zeta function (up to a small adjustment). In fact, Kaczorowski and Wiertelak \cite{KWsmooth10} showed that $R_1(x)$ grows at a much slower rate than $R(x)$: under the Riemann hypothesis, they found that \[R_1(x) = O\left(x^{1/2}\exp\left(\frac{C\log x}{\log \log x}\right)\right)\] for some constant $C$. Therefore, we can expect these smoothed error terms $R_{k-1}(x)$ to be much more amenable to standard approaches for finding limiting logarithmic distributions.

The main purpose of this article is to prove that after a $(k - 1)$-fold application of the smoothing operator for $k \geq 2$, the error term $R_{k - 1}(x)$ possesses a limiting logarithmic distribution under reasonable assumptions. The main assumption throughout the article is the famous Riemann hypothesis, which states that all nontrivial zeros of $\zeta(s)$ have the form $\rho = \frac{1}{2} + i\gamma$, where $\gamma \in \mathbb{R}$. Throughout this article, we will use $\rho$ to refer to a nontrivial zero of $\zeta(s)$, and $\gamma$ to refer to the imaginary part of $\rho$. We also make use of a conjecture formulated independently by Gonek \cite{Gon89} and Hejhal \cite{Hej89} concerning the sum \begin{align*}J_{-\ell}(T) = \sum_{0 < \gamma \leq T} \frac{1}{|\zeta'(\rho)|^{2\ell}}\end{align*} for $\ell \in \mathbb{R}$. This conjecture, which we refer to as the \emph{Gonek--Hejhal conjecture}, states that \begin{align}J_{-\ell}(T) \asymp T(\log T)^{(\ell - 1)^2}\label{eqn:Gonek--Hejhal_conjecture}\end{align} for all $\ell < 3/2$. The conjecture has been resolved in the case $\ell = 0$ by von Mangoldt \cite[pp. 97--100]{Dav80}, who proved that \begin{align}J_0(T) = \sum_{0 < \gamma \leq T} 1 = \frac{T}{2\pi}\log \frac{T}{2\pi e} + O(\log T).\label{vonMangoldt_J0}\end{align} In particular, von Mangoldt's result tells us that there are $O(\log T)$ nontrivial zeros with heights in the interval $[T,T+1]$.

For our purposes, we will need to assume the Gonek--Hejhal conjecture in the case $\ell = 1$, in particular that \[J_{-1}(T) = \sum_{0 < \gamma \leq T} \frac{1}{|\zeta'(\rho)|^2} \ll T.\] Gonek \cite{Gon89} proved $J_{-1}(T) \gg T$, under the Riemann hypothesis and the assumption that all zeros of $\zeta(s)$ are simple, and provided a heuristic argument for (\ref{eqn:Gonek--Hejhal_conjecture}) using Dirichlet polynomial approximations. Note that the Gonek--Hejhal conjecture also implies that all zeros of $\zeta(s)$ are simple, an assumption we use throughout the article. Finally, we will also assume the conjectural bound that \begin{align}\label{eqn:reciprocal_zetaprime_nontrivial_zeros}\frac{1}{|\zeta'(\rho)|} \ll |\rho|^{1/2 - \varepsilon}\end{align} for every nontrivial zero $\rho$. 

As we will see in Section \ref{sec:main_section}, we can express $R_{k-1}(x)$ as a sum involving the quantities $1/\zeta'(\rho)$, and the aforementioned conjectures will play a crucial role in allowing us to control the growth of these summands. These conjectures are widely believed to be true. For instance, Hughes--Keating--O'Connell \cite{HKO00} applied techniques from random matrix theory to find heuristic evidence for a conjecture that agrees with (and further refines) the Gonek--Hejhal conjecture. Gonek \cite{Gon99} used Montgomery's pair correlation conjecture to derive the conjecture $1/|\zeta'(\rho)| \ll_{\varepsilon} |\rho|^{1/3 - \varepsilon}$ for every nontrivial zero $\rho$, a stronger statement than (\ref{eqn:reciprocal_zetaprime_nontrivial_zeros}). However, there is currently not much evidence supporting this bound, so we assume the more conservative conjecture given by (\ref{eqn:reciprocal_zetaprime_nontrivial_zeros}) in the same manner as Akbary, Ng, and Shahabi in \cite{ANS14}.

With these assumptions in mind, we are ready to state the main result of this article. By proving that $e^{-y/2}R_{k-1}(e^y)$ is a $B^2$-almost periodic function, we can deduce that the smoothed error term $R_{k-1}(x)$ possesses a limiting distribution.

\begin{thm}\label{intro_thm:limiting_distribution}
Assume the Riemann hypothesis, that $J_{-1}(T) \ll T$, and that $|\zeta'(\rho)|^{-1} \ll |\rho|^{1/2 - \varepsilon}$. Then the function $$\widetilde{R}_{k-1}(y) := e^{-y/2} R_{k - 1}(e^y)$$ has a limiting distribution when $k \geq 2$. More precisely, there exists a probability measure $\mu_{k-1}$ on the Borel subsets of $\R$ such that for every bounded continuous function $f: \R \to \R$, we have \[\lim_{Y \to \infty} \frac{1}{Y} \int_0^Y f(e^{-y/2}R_{k-1}(e^y))\, dy = \int_\R f(x) \, d\mu_{k-1}(x).\]
\end{thm}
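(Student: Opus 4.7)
The plan, foreshadowed in the introduction, is to realize $\widetilde{R}_{k-1}(y) = e^{-y/2} R_{k-1}(e^y)$ as a $B^2$-almost periodic function with spectrum supported on the ordinates of nontrivial zeros of $\zeta(s)$, and then invoke the Akbary--Ng--Shahabi framework, which converts $B^2$-almost periodicity into the existence of a limiting distribution.

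The first step is an explicit formula. Since $\sum_{n\geq 1} \phi(n) n^{-s} = \zeta(s-1)/\zeta(s)$, applying the truncated Perron-type formula for Riesz means (which the paper promises to prove) gives
\[F_{k-1}(x) = \frac{1}{2\pi i}\int_{c - iT}^{c + iT} \frac{\zeta(s-1)}{\zeta(s)} \cdot \frac{x^s}{s^k}\, ds + E_0(x,T)\]
for $c>2$. Shifting the contour to the left of the critical line collects the residue at $s=2$, producing the main term $\tfrac{3}{2^{k-1}\pi^2} x^2$, together with polynomial residues at $s=0,1$ and, under the Riemann hypothesis with simple zeros (as implied by Gonek--Hejhal), a contribution $\frac{\zeta(\rho-1)}{\zeta'(\rho)}\cdot \frac{x^\rho}{\rho^k}$ from each nontrivial zero $\rho = 1/2 + i\gamma$. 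Dividing by $x^{1/2}$ and substituting $x = e^y$ yields
\[\widetilde{R}_{k-1}(y) = \sum_{|\gamma| \leq T} c_\gamma\, e^{i\gamma y} + \widetilde{E}(y,T), \qquad c_\gamma := \frac{\zeta(\rho-1)}{\zeta'(\rho)\,\rho^k}.\]

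The second step is $\ell^2$-summability of the Fourier coefficients. The functional equation combined with Stirling's formula gives $|\zeta(\rho-1)| \asymp |\gamma|$ for large $|\gamma|$, and a dyadic decomposition using the hypothesis $J_{-1}(T) \ll T$ yields
\[\sum_{T < |\gamma| \leq 2T} |c_\gamma|^2 \;\ll\; T^{2-2k} \sum_{T < |\gamma| \leq 2T} \frac{1}{|\zeta'(\rho)|^2} \;\ll\; T^{3-2k},\]
which is summable over dyadic ranges precisely when $k \geq 2$. This is the analytic reason the threshold $k \geq 2$ appears, and it also foreshadows why a single application of the smoothing operator should be insufficient to reduce $R(x)$ to size $\sqrt{x}$.

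Finally, one must show $\frac{1}{Y}\int_0^Y |\widetilde{E}(y,T)|^2\, dy \to 0$ as $T \to \infty$, combining the truncated Perron bound with Parseval-type control on the tail $\sum_{|\gamma| > T} |c_\gamma|^2$ (where the hypothesis $|\zeta'(\rho)|^{-1} \ll |\rho|^{1/2-\varepsilon}$ is needed to handle pointwise contributions of individual summands in $\widetilde{E}$). Once this is in hand, the Akbary--Ng--Shahabi criterion identifies $\widetilde{R}_{k-1}$ as $B^2$-almost periodic, and a standard theorem on such functions produces the probability measure $\mu_{k-1}$ with the stated property. I expect the main technical obstacle to be the uniform mean-square control of $\widetilde{E}(y,T)$: the Perron formula gives only a pointwise bound that degrades as $\log x$ approaches some ordinate $\gamma$, so one must carefully trade off the truncation parameter $T$ against the horizontal-contour contribution and the tail of the zero sum. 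This balancing is precisely what the bespoke truncated Perron formula for Riesz means is designed to enable.
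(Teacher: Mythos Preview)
Your proposal follows essentially the same architecture as the paper: derive an explicit formula for $R_{k-1}(x)$ as a truncated sum over zeros plus an error, verify the mean-square error vanishes with an appropriate choice of $T$, and feed the result into the Akbary--Ng--Shahabi framework. A few points of divergence are worth noting.

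First, a small slip: there is no residue at $s=1$. The integrand $\zeta(s-1)/\zeta(s)\cdot x^s/s^k$ has a \emph{zero} at $s=1$ coming from the pole of $\zeta(s)$ in the denominator, not a pole.

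More substantively, you have the roles of the two conjectural inputs swapped relative to the paper. In the paper, the hypothesis $J_{-1}(T)\ll T$ is used \emph{only} in extending the explicit formula from a sparse sequence of good heights $T_n$ to arbitrary $T\ge 2$: one must control $\sum_{T\le |\gamma|<T_n}\zeta(\rho-1)x^\rho/(\zeta'(\rho)\rho^k)$, and Cauchy--Schwarz together with $J_{-1}(T)\ll T$ produces the final error term. The pointwise bound $|\zeta'(\rho)|^{-1}\ll |\rho|^{1/2-\varepsilon}$ is used not in the error analysis at all, but to get the individual coefficient estimate $|c_\gamma|\ll \gamma^{3/2-k-\varepsilon}$, which is precisely the hypothesis that \cite[Corollary 1.3]{ANS14} requires (a pointwise decay rate on coefficients together with $\sum_{T\le\gamma\le T+1}1\ll\log T$, not merely $\ell^2$-summability). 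Your dyadic $\ell^2$ argument via $J_{-1}$ is perfectly sound as a route to $B^2$-almost periodicity in principle, but it is not what that particular corollary asks for; invoking it would require either a different ANS statement or a direct appeal to the general theory.

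Finally, the paper handles the mean-square error by taking $T=e^Y$ and checking that each of the six explicit Perron/contour error terms satisfies $\frac{1}{Y}\int_0^Y|\mathcal E_i(y,e^Y)|^2\,dy\to 0$; there is no ``Parseval-type control on the tail'' involved, and the worry you raise about degradation ``as $\log x$ approaches some ordinate $\gamma$'' does not arise---the delicate part of the truncated Perron error concerns $n$ near $x$, not $T$ near an ordinate.
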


As mentioned earlier, the size of the arithmetic error term prevents us from applying the general framework of \cite{ANS14} to prove the existence of a limiting distribution in the case $k = 1$, i.e., for the unsmoothed summatory totient function. However, it would be interesting to prove the existence of a limiting logarithmic distribution for the normalized \textit{analytic} error term in this case, after finding an explicit expression for it in terms of the zeros of the Riemann zeta function.

Returning to the case $k \geq 2$, we can also explicitly compute the Fourier transform of the limiting distribution $\mu_{k-1}$ under the additional assumption of the \emph{Linear Independence hypothesis}, which states that the imaginary parts of the zeros $\rho$ are linearly independent over $\mathbb{Q}$. Although numerical evidence for this conjecture is sparse (see, for instance, \cite[Table 2, Appendix A]{BT15}), it would be unexpected for the nontrivial zeros to satisfy a linear relation. 

\begin{cor}\label{intro_cor:fourier_transform}
Under the Linear Independence hypothesis and the same assumptions as Theorem \ref{intro_thm:limiting_distribution}, the Fourier transform of $\mu_{k-1}$ exists and is given by \[\widehat{\mu}_{k-1}(y) = \prod_{\gamma > 0}\mathcal{J}\left(\frac{2\zeta\left(-\frac{1}{2} + i\gamma\right)y}{\zeta'\left(\frac{1}{2} + i\gamma\right)\left(\frac{1}{2}+i\gamma\right)^k}\right) ,\] where \[\mathcal{J}(z) = \sum_{r=0}^\infty \frac{(-1)^r(z/2)^{2r}}{(r!)^2}\] is the Bessel function of the first kind. 
\end{cor}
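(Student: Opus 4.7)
The plan is to apply the Fourier-transform theorem of Akbary--Ng--Shahabi \cite{ANS14} to the expansion of $\widetilde{R}_{k-1}(y)$ produced in the proof of Theorem \ref{intro_thm:limiting_distribution}. That proof yields a Fourier-type expansion
\[\widetilde{R}_{k-1}(y) \sim \sum_\rho \frac{\zeta(\rho-1)}{\rho^k\, \zeta'(\rho)}\, e^{i\gamma y}\]
over nontrivial zeros $\rho = \tfrac{1}{2} + i\gamma$, with contributions from the trivial zeros and the residues at $s = 0, 1, 2$ either absorbed into the leading term or rendered negligible after the $e^{-y/2}$ normalization. Since $\zeta(\bar s) = \overline{\zeta(s)}$ implies the coefficients at $\rho$ and $\bar\rho$ are complex conjugate, pairing zeros gives the real-harmonic representation
\[\widetilde{R}_{k-1}(y) \sim \sum_{\gamma > 0} 2 \re\!\bigpar{a(\gamma)\, e^{i\gamma y}}, \qquad a(\gamma) := \frac{\zeta(-\tfrac12 + i\gamma)}{\bigpar{\tfrac12 + i\gamma}^k\, \zeta'\!\bigpar{\tfrac12 + i\gamma}}.\]

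Next, I would invoke the Linear Independence hypothesis. Since the positive ordinates $\{\gamma\}_{\gamma > 0}$ are $\mathbb{Q}$-linearly independent, the Kronecker--Weyl equidistribution theorem implies that the sequence $(\gamma y \bmod 2\pi)_{\gamma > 0}$ equidistributes on the infinite-dimensional torus as $y$ varies. The Akbary--Ng--Shahabi theorem on limiting distributions of $B^2$-almost periodic functions with LI frequencies then identifies $\mu_{k-1}$ as the law of the random series $\sum_{\gamma > 0} 2\re(a(\gamma)\, e^{i\Theta_\gamma})$, where $\{\Theta_\gamma\}_{\gamma > 0}$ are independent uniform random variables on $[0, 2\pi]$.

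By independence the Fourier transform $\widehat{\mu}_{k-1}$ factorizes as an infinite product over $\gamma > 0$. For each $\gamma$, the local factor is the characteristic function of $2 \re(a(\gamma)\, e^{i\Theta_\gamma}) = 2|a(\gamma)| \cos(\Theta_\gamma + \arg a(\gamma))$, which by the classical integral representation $J_0(z) = \frac{1}{2\pi} \int_0^{2\pi} e^{iz\cos\theta}\, d\theta$ evaluates at the frequency $y$ to $J_0(2|a(\gamma)| y) = \mathcal{J}(2 a(\gamma) y)$, in agreement with the expression in the statement (using that $\mathcal{J}$ is an even entire function whose value depends only on the square of its argument). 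Collecting the factors produces the claimed formula.

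The principal technical step is verifying absolute convergence of this infinite product, which is needed to justify the formal Fourier-level manipulations above. Using the functional-equation bound $|\zeta(-\tfrac12 + i\gamma)| \ll |\gamma|$, the assumed $|\zeta'(\rho)|^{-1} \ll |\rho|^{1/2 - \varepsilon}$, and the denominator factor $|\rho|^{-k}$, we obtain $|a(\gamma)| \ll |\gamma|^{3/2 - k - \varepsilon}$. For $k \geq 2$, combining this estimate with the $O(\log T)$ zero-density estimate from \eqref{vonMangoldt_J0} gives $\sum_{\gamma > 0} |a(\gamma)|^2 < \infty$, which together with the Taylor expansion $\mathcal{J}(z) = 1 - z^2/4 + O(z^4)$ near the origin forces uniform convergence of the product on compact subsets of $\mathbb{R}$. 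This $\ell^2$-summability is exactly where the hypothesis $k \geq 2$ enters, and controlling the tail of the product is the main obstacle in making the sketch rigorous.
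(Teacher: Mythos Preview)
Your approach is the same as the paper's: the paper gives no argument beyond the single sentence ``Applying \cite[Theorem 1.9]{ANS14} and assuming the Linear Independence hypothesis, we can also explicitly compute the Fourier transform of the limiting distribution $\mu_{k-1}$,'' and you have simply unpacked what that citation contains (Kronecker--Weyl equidistribution under LI, the resulting product structure of the characteristic function, and the $\ell^2$-summability of the coefficients already established in the proof of Theorem~\ref{intro_thm:limiting_distribution}).

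One genuine slip: your assertion that $J_0(2|a(\gamma)|y) = \mathcal{J}(2a(\gamma)y)$ ``because $\mathcal{J}$ depends only on the square of its argument'' is false for complex $a(\gamma)$, since $|a(\gamma)|^2 \neq a(\gamma)^2$. The characteristic function of $2|a(\gamma)|\cos\Theta_\gamma$ is $J_0(2|a(\gamma)|y)$, full stop; what you have actually proved is
\[
\widehat{\mu}_{k-1}(y)=\prod_{\gamma>0} J_0\bigl(2|a(\gamma)|\,y\bigr),
\]
which is the formulation in \cite[Theorem 1.9]{ANS14}. The displayed formula in the corollary should be read with an implicit modulus on the argument (or else is a minor misstatement in the paper); in any case this is a cosmetic issue with the target statement rather than a defect in your strategy.
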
 

We conclude with an analysis on the tails of the distribution $\mu_{k-1}$ by studying the maximal possible size of the normalized error term $x^{-1/2}R_{k - 1}(x)$.

\begin{thm}\label{intro_thm:sum_bounded}
Assume the Riemann hypothesis and that $J_{-1}(T) \ll T$. For all positive integers $k \geq 3$, the normalized error term $x^{-1/2}R_{k - 1}(x)$ is bounded. Meanwhile, for $k = 2$, under the addition assumption of the Linear Independence hypothesis, the normalized error term $x^{-1/2}R_1(x)$ is unbounded.
\end{thm}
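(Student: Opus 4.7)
The plan is to use the explicit formula (established in Section \ref{sec:main_section}) expressing the normalized smoothed error term as a sum over nontrivial zeros,
\[\widetilde{R}_{k-1}(y) = e^{-y/2}R_{k-1}(e^y) = \sum_\rho \frac{\zeta(\rho - 1)}{\zeta'(\rho)\, \rho^k}\, e^{i\gamma y} + o(1),\]
where $\rho = 1/2 + i\gamma$ ranges over the nontrivial zeros. The functional equation yields $|\zeta(-1/2 + i\gamma)| \asymp |\gamma|$, and combined with (\ref{eqn:reciprocal_zetaprime_nontrivial_zeros}) each term has modulus $\ll |\gamma|^{3/2 - k - \varepsilon}$. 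For $k \geq 3$, this is $\ll |\gamma|^{-3/2 - \varepsilon}$, and partial summation against (\ref{vonMangoldt_J0}) yields absolute convergence; hence $|\widetilde{R}_{k-1}(y)|$ is uniformly bounded, giving the boundedness of $x^{-1/2}R_{k-1}(x)$.

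For $k = 2$, I would argue by contradiction. Suppose $|\widetilde{R}_1(y)| \leq M$ uniformly. Then the measure $\mu_1$ of Theorem \ref{intro_thm:limiting_distribution} is supported in $[-M, M]$, so by Paley--Wiener its Fourier transform is entire of exponential type $\leq M$. Under LI, Corollary \ref{intro_cor:fourier_transform} identifies
\[\widehat{\mu}_1(y) = \prod_{\gamma > 0}\mathcal{J}(c_\gamma y), \qquad c_\gamma = \frac{2\,\zeta(-1/2+i\gamma)}{\zeta'(1/2+i\gamma)(1/2+i\gamma)^2}.\]
Tracking the growth of $|\widehat{\mu}_1(Re^{i\alpha})|$ in each direction $\alpha$ via the asymptotic $|\mathcal{J}(z)|\sim e^{|\im z|}/\sqrt{2\pi|z|}$ for large $|z|$, the exponential-type condition forces $\sup_\alpha \sum_\gamma |c_\gamma|\,|\sin(\alpha + \arg c_\gamma)| \leq M$; applying the elementary inequality $|\sin\phi| + |\cos\phi| \geq 1$ at the directions $\alpha = 0$ and $\alpha = \pi/2$ then yields $\sum_{\gamma > 0} |c_\gamma| \leq 2M$.

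To derive a contradiction, it therefore suffices to show
\[\sum_{\gamma > 0} |c_\gamma| \asymp \sum_{\gamma > 0} \frac{1}{|\gamma|\,|\zeta'(1/2 + i\gamma)|} = \infty.\]
I would attempt this by combining Gonek's lower bound $J_{-1}(T) \gg T$ (which holds under RH and simplicity of the nontrivial zeros, both consequences of our assumptions) with the pointwise upper bound (\ref{eqn:reciprocal_zetaprime_nontrivial_zeros}). Applying the elementary inequality $\sum_i a_i \geq (\sum_i a_i^2)/\max_i a_i$ to $a_\gamma = 1/|\zeta'(\rho)|$ on each dyadic range $\gamma \in (T, 2T]$ gives a lower bound $\sum 1/|\zeta'(\rho)| \gg T^{1/2+\varepsilon}$, and weighting by $1/|\gamma|$ and summing dyadically should then yield the divergence after more delicate analysis.

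\textbf{Main obstacle.} The crux is precisely this final divergence. Our assumptions pin $J_{-1}(T) \asymp T$ only globally, while (\ref{eqn:reciprocal_zetaprime_nontrivial_zeros}) caps the individual size of $1/|\zeta'(\rho)|$; these two ingredients leave only a narrow margin in which to extract a divergent lower bound on $\sum 1/(|\gamma|\,|\zeta'(\rho)|)$ (rather than a convergent one). Making this rigorous likely requires finer information about the local distribution of $|\zeta'(\rho)|$ across zeros than what the stated hypotheses supply directly.
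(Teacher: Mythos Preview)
Your argument for $k \geq 3$ imports a hypothesis the theorem does not assume. The statement grants only RH and $J_{-1}(T) \ll T$; the pointwise bound $|\zeta'(\rho)|^{-1} \ll |\rho|^{1/2-\varepsilon}$ of (\ref{eqn:reciprocal_zetaprime_nontrivial_zeros}) is \emph{not} among the hypotheses, yet your termwise estimate $\ll |\gamma|^{3/2-k-\varepsilon}$ relies on it. The paper avoids this by applying Cauchy--Schwarz in the form
\[
\left(\sum_{0<\gamma<T}\left|\frac{\zeta(\rho-1)}{\zeta'(\rho)\rho^k}\right|\right)^2 \ll \left(\sum_{0<\gamma<T}\frac{1}{\gamma^2}\right)\left(\sum_{0<\gamma<T}\frac{1}{|\zeta'(\rho)|^2\,\gamma^{2(k-2)}}\right),
\]
using $|\zeta(\rho-1)|\asymp\gamma$. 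The first factor converges by (\ref{vonMangoldt_J0}); the second is handled by partial summation against $J_{-1}(t)\ll t$, which is exactly where $k\geq 3$ enters (so that $2(k-2)\geq 2$). No pointwise control of $|\zeta'(\rho)|^{-1}$ is needed, and this is the substantive difference between your proof and the one in the paper.

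For $k=2$, the Paley--Wiener route is a detour, and it again smuggles in the extra hypothesis: you invoke Theorem \ref{intro_thm:limiting_distribution} and Corollary \ref{intro_cor:fourier_transform}, both of which assume (\ref{eqn:reciprocal_zetaprime_nontrivial_zeros}). The paper instead works directly with the truncated explicit formula from Proposition \ref{main equation}. Under LI, Kronecker's theorem gives, for each fixed $T$,
\[
\limsup_{x\to\infty}\left|\sum_{|\gamma|<T}\frac{\zeta(\rho-1)}{\zeta'(\rho)\rho^{2}}x^{i\gamma}\right| \;=\; \sum_{|\gamma|<T}\left|\frac{\zeta(\rho-1)}{\zeta'(\rho)\rho^{2}}\right| \;\asymp\; \sum_{|\gamma|<T}\frac{1}{|\gamma|\,|\zeta'(\rho)|},
\]
and one then lets $T\to\infty$. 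You correctly identify the divergence of $\sum_\gamma 1/(|\gamma|\,|\zeta'(\rho)|)$ as the crux; note, however, that the paper does not derive this from $J_{-1}(T)\asymp T$ either---it simply cites the divergence as known. So your ``main obstacle'' is not a gap relative to the paper's argument, but the circuitous path you take to reach it (and the unwarranted use of (\ref{eqn:reciprocal_zetaprime_nontrivial_zeros}) along the way) is.
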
 

In other words, the previous theorem tells us that when $k \geq 3$, we have $R_{k-1}(x) = O(x^{1/2})$. On the other hand, we see that $R_1(x)$ grows faster than $x^{1/2}$ under the Linear Independence hypothesis. The latter result is expected, as \cite[Theorem 1.1]{KWsmooth10} provides the unconditional lower bound $R_1(x) = \Omega_{\pm}(x^{1/2}\log\log\log(x))$. 

The article is structured as follows. In Section \ref{sec:main_section}, we show that $e^{-y/2}R_{k-1}(e^y)$ has a limiting distribution. We begin in Section \ref{subsec:nuts_bolts} by producing a truncated Perron's inversion formula for Riesz typical means, which we will need for estimates in the subsequent sections. In Section \ref{subsec:prelim_estimates}, we provide a preliminary estimate on $R_{k-1}(x)$ by writing it in terms of a sum over the zeros of the Riemann zeta function up to a height $T \geq 2$. We then apply the framework of \cite[Corollary 1.3]{ANS14} in Section \ref{subsec:proof_of_limdist} to show that $e^{-y/2}R_{k-1}(e^y)$ is a $B^2$-almost periodic function and thus possesses a limiting distribution. Finally, in Section \ref{sec:error_bounds}, we show that the normalized error term $x^{-1/2}R_{k-1}(x)$ is bounded for $k \geq 3$, but that under the additional assumption of the Linear Independence hypothesis, this is not the case for $k =2$.

\begin{figure}[h!]
    \centering
    \includegraphics{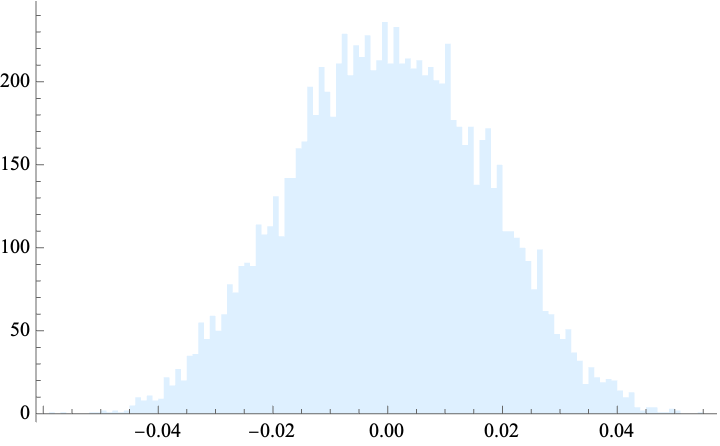}
    \caption{Histogram depicting values of $e^{-y/2}R_{1}(e^y)$ (up to a small error term) for $y=1,\ldots,10000$.}
    \label{fig:k2_10000}
\end{figure}

\begin{figure}[h!]
    \centering
    \includegraphics{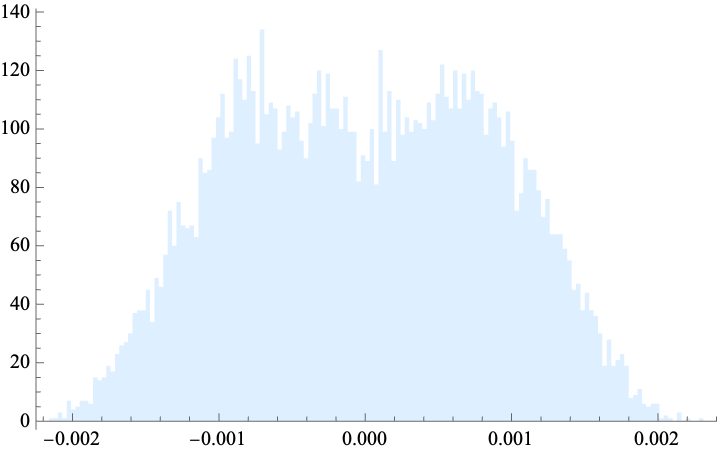}
    \caption{Histogram depicting values of $e^{-y/2}R_{2}(e^y)$ (up to a small error term) for $y=1,\ldots,10000$.}
    \label{fig:k3_10000}
\end{figure}

\subsection*{Acknowledgements} The authors would like to thank Peter Humphries for supervising this project and Ken Ono for his generous support. The authors were participants in the 2022 UVA REU in Number Theory. They are grateful for the support of grants from the National Science Foundation
(DMS-2002265, DMS-2055118, DMS-2147273), the National Security Agency (H98230-22-1-0020), and the Templeton World Charity Foundation.

\section{Limiting Distributions}\label{sec:main_section}

In this section, we show that the function $R_{k - 1}(x)$ has a limiting logarithmic distribution. The general strategy is to write $R_{k - 1}(x)$ in terms of a finite sum over zeros of the Riemann zeta function up to a height $T \geq 2$ and an error term given in terms of $x$ and $T$. We will carefully make our estimates to bound the error term, allowing us to apply \cite[Corollary 1.3]{ANS14} and thereby deduce that $e^{-y/2}R_{k - 1}(e^y)$ is a $B^2$-almost periodic function. 
 
\subsection{Nuts and Bolts} \label{subsec:nuts_bolts}

In this section, we establish several lemmas to facilitate our calculations. We begin with a few estimates involving the Riemann zeta function.

\begin{lem}[{\cite[Lemma 3]{Ng04}}] \label{lem:specific_sequence_heights}
Assume the Riemann hypothesis. Given any $\varepsilon > 0$, for every nonnegative integer $n$, there exists some $T_n \in [n, n + 1]$ such that \[\frac{1}{\zeta(\sigma + iT_n)} = O_\varepsilon(T_n^\varepsilon), \quad \frac{1}{\zeta(\sigma - iT_n)} = O_\varepsilon(T_n^\varepsilon)\] whenever $-1 \leq \sigma \leq 2$. 
\end{lem}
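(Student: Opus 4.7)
My plan is to follow the classical template of Titchmarsh (Chapter 14): find some $T_n \in [n, n+1]$ separated from every ordinate $\gamma$ of a nontrivial zero of $\zeta$, control $|\zeta'/\zeta(\sigma + iT_n)|$ via its partial fraction expansion, and integrate to bound $|\zeta(\sigma + iT_n)|^{-1}$ for $\sigma \in [1/2, 2]$. The range $\sigma \in [-1, 1/2)$ will then follow from the functional equation, and the estimate at $-iT_n$ from the Schwarz reflection principle.

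First, by the von Mangoldt estimate (\ref{vonMangoldt_J0}), the number of nontrivial zeros with ordinates in $[n, n+1]$ is $O(\log n)$. A pigeonhole on $[n, n+1]$, excising short neighborhoods of each such ordinate, produces some $T_n$ separated from every $\gamma$ by a controlled amount which one chooses precisely so that the resulting bound is $T_n^\varepsilon$. Next, I would invoke the standard partial fraction identity, valid under RH,
\[
\frac{\zeta'}{\zeta}(s) \;=\; \sum_{|\gamma - \im s| \leq 1} \frac{1}{s - \rho} + O(\log|\im s|),
\]
uniformly for $\re s \in [-1, 2]$. The separation bound together with the $O(\log T_n)$ zero count yields an upper bound on $|\zeta'/\zeta(\sigma + iT_n)|$. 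Integrating $\zeta'/\zeta$ horizontally from $2 + iT_n$ — where $|\zeta|^{-1} = O(1)$ by absolute convergence of the Euler product — down to $\sigma + iT_n$ and taking real parts then produces the desired bound $|\zeta(\sigma + iT_n)|^{-1} \ll T_n^\varepsilon$ for $\sigma \in [1/2, 2]$.

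For $\sigma \in [-1, 1/2)$, I would apply the functional equation $\zeta(s) = \chi(s)\zeta(1 - s)$ together with the Stirling-based asymptotic $|\chi(\sigma + iT_n)| \asymp T_n^{1/2 - \sigma}$. This reduces the problem to bounding $|\zeta(1 - \sigma + iT_n)|^{-1}$ with $1 - \sigma \in (1/2, 2]$, which the previous step already handles; since $|\chi(\sigma + iT_n)|^{-1} \asymp T_n^{\sigma - 1/2} \leq 1$ throughout this range, no loss is incurred. The symmetric statement with $-iT_n$ in place of $+iT_n$ follows immediately from the identity $\zeta(\overline s) = \overline{\zeta(s)}$, which preserves absolute values.

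The main technical delicacy lies in the bookkeeping of the integration step: the bound produced must be kept to $T_n^\varepsilon$, not the $\exp(c(\log T_n)^2)$ loss that a crude spacing $|T_n - \gamma| \gg 1/\log T_n$ would yield after a direct partial fraction estimate. To close this gap one must leverage sharper conditional bounds on $\log|\zeta|$ available under RH (of the Littlewood--Titchmarsh type, with a $\log\log T$ saving in the exponent), which is the step that requires the explicit accounting referenced in Ng's paper rather than the soft outline sketched above.
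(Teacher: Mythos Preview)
The paper does not supply a proof of this lemma: it is quoted verbatim as \cite[Lemma 3]{Ng04} and no argument is given. Your outline is the standard route (essentially Titchmarsh, Theorem~14.16, which is what Ng invokes), and you have correctly isolated the one genuine subtlety: the na\"ive pigeonhole spacing $|T_n-\gamma|\gg 1/\log T_n$ together with the partial-fraction bound $|\zeta'/\zeta|\ll(\log T_n)^2$ only yields $|\zeta|^{-1}\ll\exp(c(\log T_n)^2)$, and closing the gap to $T_n^\varepsilon$ requires the RH-conditional Littlewood-type input $S(T)=O(\log T/\log\log T)$ and the accompanying refinement of the zero-spacing argument. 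Since you explicitly flag this and defer the bookkeeping to the cited source, your proposal is in complete agreement with what the paper itself does---namely, cite the result rather than reprove it.
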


\begin{lem} \label{phragmen lindelof}
Assume the Riemann hypothesis. Given any $\varepsilon > 0$, for all $|T| > 1$, we have $\zeta(\sigma + iT) = O_\varepsilon(|T|^\varepsilon)$ when $1/2 \leq \sigma \leq 3$, and $\zeta(\sigma + iT) = O_\varepsilon(|T|^{1/2 - \sigma + \varepsilon})$ when $-2 \leq \sigma \leq 1/2$. 
\end{lem}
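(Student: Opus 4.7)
The strategy is the classical convexity argument via Phragmén–Lindelöf combined with the functional equation. I would split the proof into two stages: first establish the bound in the right strip $1/2 \leq \sigma \leq 3$ (this is essentially the Lindelöf hypothesis, known to follow from RH), then transport the bound across the critical line using the functional equation for $\zeta$.

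For the first stage, I would work with the entire function $g(s) = (s-1)\zeta(s)$ in order to avoid the pole at $s=1$. On the line $\sigma = 3$, absolute convergence of the Dirichlet series gives $|\zeta(3+iT)| \leq \zeta(3)$, so $|g(3+iT)| \ll |T|$. On the line $\sigma = 1/2$, RH implies the Lindelöf hypothesis (a classical theorem of Littlewood, obtainable by integrating $\log\zeta$ along a contour and applying Hadamard's three-circles theorem), so $|\zeta(1/2+iT)| \ll_\varepsilon |T|^\varepsilon$, hence $|g(1/2+iT)| \ll_\varepsilon |T|^{1+\varepsilon}$. The convexity bound for $\zeta$ in the strip provides the polynomial growth hypothesis needed to apply the Phragmén–Lindelöf principle to $g$ on the strip $1/2 \leq \sigma \leq 3$. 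This yields $|g(\sigma+iT)| \ll_\varepsilon |T|^{1+\varepsilon}$ uniformly, so after dividing by $|s-1| \asymp |T|$ (valid for $|T|>1$) we conclude $|\zeta(\sigma+iT)| \ll_\varepsilon |T|^\varepsilon$ on $1/2 \leq \sigma \leq 3$.

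For the second stage, invoke the functional equation $\zeta(s) = \chi(s)\zeta(1-s)$ with $\chi(s) = 2^s\pi^{s-1}\sin(\pi s/2)\Gamma(1-s)$. For $\sigma \in [-2, 1/2]$ and $|T| > 1$, Stirling's formula applied to $\Gamma(1-s)$ gives the asymptotic $|\chi(\sigma+iT)| \asymp |T|^{1/2-\sigma}$, which is the standard estimate quoted in every textbook treatment. Since $1-\sigma \in [1/2, 3]$, the first stage already gives $|\zeta(1-s)| = |\zeta((1-\sigma)-iT)| \ll_\varepsilon |T|^\varepsilon$. Multiplying produces $|\zeta(\sigma+iT)| \ll_\varepsilon |T|^{1/2-\sigma+\varepsilon}$, which is the claimed bound.

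The main obstacle, if any, is really just the citation of the fact that RH implies the Lindelöf hypothesis; everything else is mechanical once one has boundary bounds on a strip and a polynomial growth hypothesis for Phragmén–Lindelöf. A minor technical point to watch is that Phragmén–Lindelöf must be applied to the entire function $(s-1)\zeta(s)$ rather than $\zeta(s)$ directly, to handle the pole at $s=1$; otherwise the method is entirely standard.
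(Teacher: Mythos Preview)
Your proposal is correct and follows essentially the same route as the paper: Phragm\'en--Lindel\"of on the strip $1/2 \leq \sigma \leq 3$ using RH $\Rightarrow$ Lindel\"of at the critical line and boundedness at $\sigma = 3$, then the functional equation plus Stirling to transfer the bound to $-2 \leq \sigma \leq 1/2$. Your explicit passage to the entire function $(s-1)\zeta(s)$ to handle the pole at $s=1$ is a nicety the paper omits; otherwise the arguments coincide.
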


\begin{proof}
For the first part, the Riemann hypothesis implies the Lindel\"of hypothesis, that $\zeta(1/2 + iT) = O_\varepsilon(T^{\varepsilon})$. Meanwhile, we have $\zeta(3 + iT) = O(1)$. The bound for $1/2 \leq \sigma \leq 3$ then follows by the Phragm\'en--Lindel\"of convexity principle. 

For $-2 \leq \sigma \leq 1/2$, the functional equation for the zeta function gives \[\zeta(s) = \frac{\pi^{(s - 1)/2}\Gamma((1 - s)/2)}{\Gamma(s/2)}\zeta(1 - s) \asymp T^{1/2 - \sigma}\zeta(1 - s)\] by a calculation using Stirling's formula, and the bound follows immediately from the bound for $1/2 \leq \sigma \leq 3$. 
\end{proof}

We will now proceed to prove two technical lemmas that will culminate in a truncated version of Perron's inversion formula for {Riesz typical means}. Following \cite[Chapter 5.1.2]{MV06}, we define, for arbitrary integers $k$ and $x$, the \textit{Riesz typical mean} $M_{k}(a_n;x)$ of a sequence $(a_n)_{n=1}^\infty$ to be \begin{align}M_k(a_n;x) :=\frac{1}{k!} \sum_{n \leq x} a_n(\log(x/n))^k.\end{align} In particular, notice that $F_k(x) = M_k(\phi(n);x)$. 

\begin{lem}\label{lem:a^s/s bounds far from n}
For any $a > 0$ with $a \neq 1$, any $c > 0$, and any integer $k \geq 2$, we have \[\frac{1}{2\pi i}\int_{c - iT}^{c + iT} \frac{a^s}{s^k}\, ds = \mathbf{1}_{a > 1}\frac{(\log a)^{k - 1}}{(k - 1)!} + O\left(\frac{a^c}{T^k\log a}\right).\] 
\end{lem}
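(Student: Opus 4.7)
The plan is to evaluate the integral by contour integration, splitting into the cases $a > 1$ and $a < 1$ and closing the rectangular contour in the half-plane where $|a^s|$ decays.

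For $a > 1$, I would fix a large $U > 0$ and consider the rectangular contour $\mathcal{C}_U$ with vertices $c \pm iT$ and $-U \pm iT$, oriented counterclockwise. The integrand $a^s/s^k$ is meromorphic with a single pole of order $k$ at the origin inside $\mathcal{C}_U$. Computing the residue via the higher-order pole formula,
\[
\operatorname{Res}_{s=0}\frac{a^s}{s^k} = \frac{1}{(k-1)!}\lim_{s\to 0}\frac{d^{k-1}}{ds^{k-1}} a^s = \frac{(\log a)^{k-1}}{(k-1)!},
\]
which gives the main term in the statement. It then remains to show that the three non-target sides of the contour contribute only the claimed error. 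The left vertical segment, parameterized by $s = -U + it$ for $t \in [-T,T]$, contributes at most $(a^{-U}/U^k)\cdot 2T$, which tends to $0$ as $U \to \infty$ because $a^{-U} \to 0$. For the top horizontal segment, parameterizing $s = \sigma + iT$ and noting $|s|^k \geq T^k$, the contribution is bounded by
\[
\frac{1}{2\pi}\int_{-U}^c \frac{a^\sigma}{T^k}\, d\sigma \leq \frac{a^c}{2\pi T^k \log a},
\]
and the bottom segment is bounded identically. Sending $U \to \infty$ and applying the residue theorem then yields the desired identity.

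For $a < 1$, I would instead close to the right with vertices $c \pm iT$ and $U \pm iT$, again oriented counterclockwise. In this half-plane the integrand is analytic, so the closed contour integral vanishes, consistent with the absence of a main term (since $\mathbf{1}_{a > 1} = 0$). The right vertical segment now contributes at most $(a^U/U^k)\cdot 2T \to 0$ as $U \to \infty$ since $a^U \to 0$, and the horizontal segments are estimated in the same way, giving $\int_c^U a^\sigma\, d\sigma/T^k \leq a^c/(T^k|\log a|)$; this matches the claimed bound once the big-$O$ notation is interpreted with absolute values, which is standard.

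I do not expect a serious obstacle in this proof: the argument is a textbook application of the residue theorem, and the decay of $a^s$ in the appropriate half-plane makes the far vertical side of the rectangle vanish in the limit. The only care needed is in tracking signs and orientations of the contour and in verifying that the horizontal tails converge uniformly in $U$ so that the limit can be exchanged with the bound. Once these bookkeeping points are handled, the two cases combine to give the statement in its stated form.
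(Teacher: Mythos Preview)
Your proposal is correct and follows essentially the same approach as the paper: close the contour in a rectangle to the left (for $a>1$) or to the right (for $a<1$), compute the residue at $s=0$ via the Taylor expansion of $a^s$, let the far vertical side vanish in the limit, and bound the horizontal sides by $\int a^\sigma\,d\sigma/T^k \leq a^c/(T^k|\log a|)$. The only cosmetic difference is notation ($U$ versus the paper's $m$).
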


\begin{proof}
For some real number $m$, consider the integral of $a^s/s^k$ over the positively-oriented rectangle with vertices at $c - iT$, $c + iT$, $m + iT$, and $m - iT$. In the case where $a > 1$, we will take $m \to -\infty$. The only pole of $a^s/s^k$ inside this contour is at $0$, and we have \[\frac{a^s}{s^k} = \frac{1}{s^k}\left(1 + s\log a + \frac{(s\log a)^2}{2!} + \frac{(s\log a)^3}{3!} + \cdots\right),\] so the residue at $0$ is $(\log a)^{k - 1}/(k - 1)!$. Moreover, we have \[\lim_{m \to -\infty} \int_{m + iT}^{m - iT} \frac{a^s}{s^k}\, ds = 0,\] since $|a^s/s^k|$ approaches $0$ and the length of the segment is fixed. Meanwhile, we have \[\int_{c + iT}^{-\infty + iT} \frac{a^s}{s^k}\, ds \ll \int_{-\infty}^c \frac{a^u}{T^k}\, du = \frac{a^c}{T^k\log a},\] and the same is true for the integral from $-\infty - iT$ to $c - iT$. 
    
The case where $a < 1$ can be proved similarly by considering instead the contour where $m \to +\infty$ and observing that there is no pole in this case. 
\end{proof}

\begin{lem}\label{lem:a^s/s bounds close to n}
For any $\frac{1}{2} \leq a \leq 2$, $c > 0$, $T > 0$, and integer $k \geq 2$, \[\frac{1}{2\pi i}\int_{c - iT}^{c + iT} \frac{a^s}{s^k}\, ds = \mathbf{1}_{a > 1}\frac{(\log a)^{k - 1}}{(k - 1)!} + O_\varepsilon\left(\frac{1}{c^\varepsilon T^{k - 1 - \varepsilon}} + \frac{2^c \cdot c}{T^k}\right)\] for any $\varepsilon > 0$.
\end{lem}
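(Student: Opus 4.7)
The aim is to upgrade Lemma~\ref{lem:a^s/s bounds far from n} to allow the parameter $a$ to be close to $1$, where the $1/\log a$ factor there degenerates. The plan is to express the truncated integral as the full improper integral along $\mathrm{Re}(s)=c$ minus its tails, evaluate the improper integral exactly, and then estimate the tails via contour deformation. The evaluation of $\frac{1}{2\pi i}\int_{c-i\infty}^{c+i\infty} a^s/s^k\,ds$ is standard: shifting the contour leftward for $a > 1$ picks up only the residue $(\log a)^{k-1}/(k-1)!$ at $s=0$ (the horizontal connectors vanish because $k\geq 2$ forces the integrand to decay like $1/|s|^k$); shifting rightward for $a < 1$ picks up nothing; and for $a=1$, the antiderivative $-\frac{1}{(k-1)s^{k-1}}$ shows the integral vanishes.

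The main content is thus to bound the tail $\frac{1}{2\pi i}\int_{|\mathrm{Im}(s)|>T}\frac{a^{c+it}}{(c+it)^k}\,dt$ by $O_\varepsilon\bigl(c^{-\varepsilon}T^{\varepsilon-k+1}+2^c c\,T^{-k}\bigr)$. I would split into two regimes. When $T\leq 2c$, the direct pointwise bound $|a^s/s^k|\leq 2^c/|t|^k$ yields a tail of $O(2^c/T^{k-1})$, which is absorbed into $O(2^c c/T^k)$ since $1/T^{k-1}\leq 2c/T^k$ in this regime. When $T > 2c$, I instead deform the half-line $\{c+it:t\geq T\}$ leftward to the L-shape consisting of the horizontal segment from $c+iT$ to $\sigma_0+iT$ followed by the vertical ray $\{\sigma_0+it:t\geq T\}$, for a $\sigma_0\in(0,c]$ to be chosen. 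Cauchy's theorem justifies the deformation because $a^s/s^k$ is holomorphic in the right half-plane and, since $k\geq 2$, the horizontal connector at $+i\infty$ vanishes.

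On the deformed contour, the horizontal segment contributes at most $2^c(c-\sigma_0)/T^k\leq 2^c c/T^k$ (using $a\leq 2$ so that $a^\sigma\leq 2^c$ for $\sigma\in[\sigma_0,c]$), while the shifted vertical tail contributes at most $a^{\sigma_0}/((k-1)T^{k-1})\leq 2^{\sigma_0}/T^{k-1}$. Choosing $\sigma_0=\min(c,\varepsilon\log_2(T/c))\in(0,c]$ makes $2^{\sigma_0}\leq (T/c)^\varepsilon$, so the vertical piece is $\ll 1/(c^\varepsilon T^{k-1-\varepsilon})$; the two sub-cases $\sigma_0=\varepsilon\log_2(T/c)$ and $\sigma_0=c$ (the latter occurring only when $T>c\cdot 2^{c/\varepsilon}$, in which case $T^\varepsilon>c^\varepsilon 2^c$) both yield the desired bound. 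Handling the lower tail symmetrically completes the argument. The main source of friction is the bookkeeping around the choice of $\sigma_0$: one must split off $T\leq 2c$ so that $\sigma_0$ is never forced to approach the pole at $s=0$, and within $T>2c$ one must separately address the extreme sub-case $T>c\cdot 2^{c/\varepsilon}$ where $\sigma_0$ saturates at $c$.
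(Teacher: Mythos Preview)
Your argument is correct, but the paper takes a shorter route. Rather than evaluating the full vertical integral and then bounding the vertical tails via an L-shaped deformation with an optimized shift $\sigma_0$, the paper simply reuses the rectangular contour of Lemma~\ref{lem:a^s/s bounds far from n} and estimates the horizontal connector $\int_{c+iT}^{\mp\infty+iT} a^s s^{-k}\,ds$ differently: it splits that ray at $\Re(s)=\mp c$. On the near piece $|\Re(s)|\leq c$ one uses $|a^s|\leq 2^c$ and $|s|\geq T$, yielding the $2^c c/T^k$ term; on the far piece one has $|a^s|\leq 1$ (since $a$ and $\Re(s)$ have the right signs) together with the interpolation $|s|^k\geq T^{k-1-\varepsilon}|\Re(s)|^{1+\varepsilon}$, which gives $\int_c^\infty u^{-1-\varepsilon}\,du\,/T^{k-1-\varepsilon}\ll_\varepsilon 1/(c^\varepsilon T^{k-1-\varepsilon})$. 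The key device is that $|s|^k$ can donate a factor $|\Re(s)|^{1+\varepsilon}$ out of its $T^k$ growth, which makes the horizontal integral converge without relying on any $a^u$ decay; this eliminates the $T\leq 2c$ versus $T>2c$ case split, the auxiliary parameter $\sigma_0$, and the saturated sub-case $\sigma_0=c$ that your approach has to track. Your method has the minor advantage of staying in the right half-plane throughout the tail estimate (so one never comes near the pole), but the cost is the extra bookkeeping you noted.
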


\begin{proof}
Consider the integral over the same contour as in Lemma \ref{lem:a^s/s bounds far from n}. In the case $a \geq 1$, it again suffices to bound the integral \[\int_{c + iT}^{-\infty + iT} \frac{a^s}{s^k}\, ds = \left(\int_{c + iT}^{-c + iT} + \int_{-c + iT}^{-\infty + iT}\right) \frac{a^s}{s^k}\, ds.\] For the first segment, we have $|a^s| \leq 2^c$ and $|s^k| \geq T^k$, so \[\int_{c + iT}^{-c + iT} \frac{a^s}{s^k}\, ds \ll \frac{2^c\cdot c}{T^k}.\] For the second segment, we have $|a^s| \leq 1$ and $|s^k| \geq T^{k - 1 - \varepsilon}u^{1 + \varepsilon}$, where $u = -\Re(s)$, which means \[\int_{-c + iT}^{-\infty + iT} \frac{a^s}{s^k}\, ds \ll \frac{1}{T^{k - 1 - \varepsilon}}\int_c^\infty \frac{1}{u^{1 + \varepsilon}}\, du \ll \frac{1}{T^{k - 1 - \varepsilon}c^{\varepsilon}}.\] Similarly, in the case $a \leq 1$ it suffices to bound \[\int_{c + iT}^{\infty + iT} \frac{a^s}{s^k}\, ds \ll \frac{1}{T^{k - 1 - \varepsilon}}\int_c^\infty \frac{1}{u^{1 + \varepsilon}}\, du \ll \frac{1}{T^{k - 1 - \varepsilon}c^{\varepsilon}}. \qedhere\] 
\end{proof}

With the previous two lemmas in place, we are ready to prove a truncated version of Perron's inversion formula for arbitrary {Riesz typical means.} In this article, we will specialize this lemma to the case where the coefficients of the Dirichlet series are given by $\phi(n)$.

\begin{lem}\label{lem:generalized_perron_truncated}
Let $\alpha(s) = \sum_{n = 1}^\infty a_nn^{-s}$ be absolutely convergent for $\Re(s) > \sigma_a$, and $|a_n| \ll \Phi(n)$ for a positive non-decreasing function $\Phi(y)$. Then for any integer $k \geq 2$, for any $c > \max(\sigma_a, 0)$, for all $x \geq 2$ and real $T > 0$ we have 
\begin{align*}M_{k-1}(a_n;x) &= \frac{1}{2\pi i}\int_{c - iT}^{c + iT} \alpha(s)\frac{x^s}{s^k}\, ds \\ &+ O_\varepsilon\left(\frac{\Phi(x + 1)}{c^\varepsilon T^{k - 1 - \varepsilon}} + \frac{2^c \cdot c\Phi(x + 1)}{T^k} + \frac{x^c}{T^k}\sum_{n = 1}^\infty \frac{|a_n|}{n^c} + \frac{2^cx\log x\Phi(2x)}{T^k}\right)\end{align*} for any $\varepsilon > 0$.
\end{lem}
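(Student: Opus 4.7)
The plan is the standard Perron-type argument: swap sum and integral to reduce the claim to evaluating $\frac{1}{2\pi i}\int_{c - iT}^{c+iT}(x/n)^s/s^k\, ds$ for each $n$, and then apply Lemma \ref{lem:a^s/s bounds far from n} or Lemma \ref{lem:a^s/s bounds close to n} depending on how close $x/n$ is to $1$. Since $c > \sigma_a$, the Dirichlet series $\alpha(s)$ converges absolutely on $\Re(s) = c$; moreover $|s| \geq c > 0$ along the contour, so $\sum_n |a_n|\int_{-T}^T (x/n)^c/c^k\, dt = (2Tx^c/c^k)\sum_n |a_n|/n^c < \infty$ and Fubini yields
\[
\frac{1}{2\pi i}\int_{c - iT}^{c + iT} \alpha(s)\frac{x^s}{s^k}\, ds = \sum_{n = 1}^\infty a_n \cdot \frac{1}{2\pi i}\int_{c - iT}^{c + iT} \frac{(x/n)^s}{s^k}\, ds.
\]
Collecting the main-term contributions $\mathbf{1}_{x/n > 1}(\log(x/n))^{k-1}/(k-1)!$ produced by both lemmas reconstructs exactly $M_{k-1}(a_n;x)$; the boundary case $n = x$ (when $x$ is an integer) contributes nothing since $(\log 1)^{k-1} = 0$ for $k \geq 2$.

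The error accounting is done in three regimes. First, for $n \leq x/2$ or $n \geq 2x$ we have $|\log(x/n)| \geq \log 2$, so Lemma \ref{lem:a^s/s bounds far from n} gives a contribution of $\sum |a_n|(x/n)^c/T^k \ll (x^c/T^k)\sum_n |a_n|/n^c$, matching the third error term. Second, let $n_0$ denote the integer nearest to $x$, so $n_0 \leq x+1$ and $x/n_0 \in [1/2,2]$; applying Lemma \ref{lem:a^s/s bounds close to n} to this single term and using $|a_{n_0}| \ll \Phi(n_0) \ll \Phi(x+1)$ recovers the first two error terms. Finally, for the remaining $n \in [x/2, 2x]$ with $n \neq n_0$, we return to Lemma \ref{lem:a^s/s bounds far from n} and exploit the Taylor-type lower bound $|\log(x/n)| \gg |x-n|/x$, valid since $x/n$ is bounded in $[1/2,2]$. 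Bounding $|a_n| \ll \Phi(2x)$ and $(x/n)^c \leq 2^c$ uniformly over this range produces
\[
\ll \frac{2^c x \Phi(2x)}{T^k}\sum_{\substack{n \in [x/2,2x]\\ n \neq n_0}}\frac{1}{|x-n|} \ll \frac{2^c x \log x \Phi(2x)}{T^k},
\]
where we used $|x-n| \gg |n - n_0|$ for $n \neq n_0$ (since $|x-n_0| \leq 1/2$) to reduce the inner sum to a truncated harmonic series.

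The main technical point is the treatment of this third regime: one must isolate the single integer $n_0$ (to which the sharper Lemma \ref{lem:a^s/s bounds close to n} is applied) from the remaining $O(x)$ nearby integers, since applying Lemma \ref{lem:a^s/s bounds close to n} to the entire interval $[x/2, 2x]$ would introduce a spurious factor of $x$ into the first two error terms. The harmonic-sum estimate is precisely what allows us to handle the bulk of the nearby integers with the weaker Lemma \ref{lem:a^s/s bounds far from n} at a cost of only $\log x$; once this point is handled, the remainder of the proof is straightforward bookkeeping.
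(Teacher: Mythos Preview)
Your proof is correct and follows essentially the same route as the paper: swap sum and integral, apply Lemma~\ref{lem:a^s/s bounds close to n} to the integer(s) nearest $x$ and Lemma~\ref{lem:a^s/s bounds far from n} to the rest, and handle the intermediate range $[x/2,2x]$ via the harmonic-sum bound coming from $|\log(x/n)|\asymp |x-n|/x$. The only cosmetic differences are that the paper invokes monotone convergence rather than Fubini for the interchange and applies Lemma~\ref{lem:a^s/s bounds close to n} to the (at most two) integers with $|n-x|<1$ rather than to the single nearest integer $n_0$.
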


\begin{proof}
By the monotone convergence theorem, we have \begin{align}\frac{1}{2\pi i}\int_{c - iT}^{c + iT} \alpha(s)\frac{x^s}{s^k} \, ds = \sum_{n = 1}^\infty a_n\cdot \frac{1}{2\pi i}\int_{c - iT}^{c + iT} \frac{(x/n)^s}{s^k}\, ds.\label{swapped sum}\end{align} The right-hand side of (\ref{swapped sum}) is equal to \[\frac{1}{(k - 1)!}\sum_{n \leq x} a_n(\log(x/n))^{k - 1} + O\left(\sum_{|n - x|\geq 1} |a_n|\frac{(x/n)^c}{T^k|\log(x/n)|}\right) + O_\varepsilon\left(\frac{\Phi(x + 1)}{c^\varepsilon T^{k - 1 - \varepsilon}} + \frac{2^c \cdot c\Phi(x + 1)}{T^k}\right),\] by applying Lemma \ref{lem:a^s/s bounds far from n} for all $n$ with $|n - x| \geq 1$, and Lemma \ref{lem:a^s/s bounds close to n} for the values of $n$ with $|n - x| < 1$ (of which there are at most two). 

To bound the first error term, we split it into two sums --- one over all $n$ with $n \leq x/2$ or $n \geq 2x$, and one over all $x/2 \leq n \leq x$ with $|n - x| \geq 1$. For the first sum, we have $|\log(x/n)| \gg 1$, which gives \[\sum_{\substack{n \leq x/2 \\ \text{or} ~ n \geq 2x}} |a_n|\frac{(x/n)^c}{T^k|\log(x/n)|} \ll \frac{x^c}{T^k} \sum_{n = 1}^\infty \frac{|a_n|}{n^c}.\] For the second, we have $(x/n)^c \leq 2^c$ and $|a_n| \leq \Phi(2x)$; meanwhile, we also have $|\log(x/n)| \asymp |x - n|/x$ (using the fact that $|\log(1 + t)| \asymp |t|$ for $-1/2 \leq t \leq 1$). This gives \[\sum_{\substack{x/2 \leq n \leq x, \\ |n - x| \geq 1}} |a_n|\frac{(x/n)^c}{T^k|\log(x/n)|} \ll \frac{2^c\Phi(2x)}{T^k}\sum_{\ell \leq x} \frac{1}{\ell} \ll \frac{2^cx\log x\Phi(2x)}{T^k}.\qedhere\] 
\end{proof}

\subsection{Preliminary Estimates}\label{subsec:prelim_estimates}

In this subsection, we will derive an expression for $R_{k - 1}(x)$ involving the zeros of the Riemann zeta function up to a certain height $T$ (given a specific sequence of heights $T_n$ as defined in Lemma \ref{lem:specific_sequence_heights}), and an error term depending on $x$ and $T$ using the lemmas in Section \ref{subsec:nuts_bolts}. In what follows, we will fix $c = 9/4$. Moreover, for the remainder of this article, we assume $k \geq 2$. We begin by applying Lemma \ref{lem:generalized_perron_truncated} to the Dirichlet series corresponding to $\phi(n)$ to write our smoothed error term $R_{k-1}(x)$ in terms of an integral on the segment from $c-iT$ to $c+iT$.

\begin{lem} \label{perron inversion}
For all $x \geq 2$ and all $T > 0$, we have \[R_{k - 1}(x) = \frac{1}{2\pi i}\int_{c - iT}^{c + iT} \frac{\zeta(s - 1)}{\zeta(s)}\frac{x^s}{s^k} \, ds - \frac{3}{2^{k - 1}\pi^2}x^2 + O_\varepsilon\left(\frac{x^c}{T^k} + \frac{x}{T^{k - 1 - \varepsilon}}\right)\] for any $\varepsilon > 0$. 
\end{lem}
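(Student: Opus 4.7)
The plan is to apply Lemma \ref{lem:generalized_perron_truncated} directly to the Dirichlet series of the Euler totient function and then translate from $F_{k-1}(x)$ to $R_{k-1}(x)$ by subtracting off the main term.

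First, I would identify the relevant data. The identity $\phi = \mu * \mathrm{id}$ yields the Dirichlet series
\[\sum_{n=1}^\infty \frac{\phi(n)}{n^s} = \frac{\zeta(s-1)}{\zeta(s)},\]
which converges absolutely on $\Re(s) > 2$, so we may take $\sigma_a = 2$. Since $\phi(n) \leq n$, the non-decreasing majorant $\Phi(y) = y$ works. By definition, $F_{k-1}(x) = M_{k-1}(\phi(n); x)$.

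Second, I would invoke Lemma \ref{lem:generalized_perron_truncated} with $a_n = \phi(n)$, $\Phi(y) = y$, and $c = 9/4 > 2 = \max(\sigma_a, 0)$, producing
\[F_{k-1}(x) = \frac{1}{2\pi i}\int_{c-iT}^{c+iT} \frac{\zeta(s-1)}{\zeta(s)}\frac{x^s}{s^k}\, ds + E_1 + E_2 + E_3 + E_4,\]
where the four error terms come from the statement of the lemma. I would then simplify each:
\[E_1 \ll \frac{x+1}{c^\varepsilon T^{k-1-\varepsilon}} \ll \frac{x}{T^{k-1-\varepsilon}}, \qquad E_2 \ll \frac{2^{9/4}\cdot (9/4)(x+1)}{T^k} \ll \frac{x}{T^k},\]
\[E_3 \ll \frac{x^c}{T^k}\cdot \frac{\zeta(c-1)}{\zeta(c)} \ll \frac{x^c}{T^k}, \qquad E_4 \ll \frac{2^{9/4} x\log x \cdot 2x}{T^k} \ll \frac{x^2\log x}{T^k}.\]
Since $c = 9/4$, we have $x^2\log x = o(x^{9/4}) = o(x^c)$, so $E_4$ is absorbed into $E_3$, and $E_2$ is absorbed into $E_1$ (as $T^k \geq T^{k-1-\varepsilon}$ is false in general, but $x/T^k \ll x/T^{k-1-\varepsilon}$ holds when we combine with $E_1$ by adjusting $\varepsilon$; alternatively, the $x/T^k$ contribution is always dominated by $x^c/T^k$ since $x \geq 2$). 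All four errors thus collapse into $O_\varepsilon(x^c/T^k + x/T^{k-1-\varepsilon})$.

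Finally, since by definition $R_{k-1}(x) = F_{k-1}(x) - \tfrac{3}{2^{k-1}\pi^2}x^2$, subtracting the main term from both sides yields exactly the stated formula. There is no substantial obstacle here: the argument is a direct specialization of Lemma \ref{lem:generalized_perron_truncated}, and the only mild care needed is in verifying that the fourth error term $x^2\log x/T^k$ is indeed dominated by $x^c/T^k$ for the specific choice $c = 9/4$, which follows from $x^{1/4}/\log x \to \infty$.
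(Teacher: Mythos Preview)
Your proposal is correct and follows essentially the same approach as the paper: both identify the Dirichlet series $\sum \phi(n)n^{-s} = \zeta(s-1)/\zeta(s)$, apply Lemma \ref{lem:generalized_perron_truncated} with $a_n = \phi(n)$, $\Phi(y) = y$, and $c = 9/4$, and then subtract the main term. The only difference is that you spell out the absorption of the four error terms explicitly, whereas the paper compresses this into the phrase ``using \ldots the fact that $c > 2$ is fixed.''
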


\begin{proof}
Note that the Dirichlet series corresponding to $\phi$ is \[\sum_{n = 1}^\infty \frac{\phi(n)}{n^s} = \prod_{p ~ \text{prime}} \left(1 + \sum_{k = 1}^\infty \frac{p^{k - 1}(p - 1)}{p^{ks}}\right) = \frac{\zeta(s - 1)}{\zeta(s)}.\] Now apply Lemma \ref{lem:generalized_perron_truncated}, using $a_n = \phi(n)$, $\Phi(y) = y$, and the fact that $c > 2$ is fixed.
\end{proof}

For a height $T \geq 1$ in the sequence defined by Lemma \ref{lem:specific_sequence_heights} and a nonnegative integer $M$, consider the contour $B_{c,T,M}$ given by the segment $L_1$ from $c - iT$ to $c + iT$, the segment $L_2$ from $c + iT$ to $-(2M + 1) + iT$, the segment $L_3$ from $-(2M + 1) + iT$ to $-(2M + 1) - iT$, and the segment $L_4$ from $-(2M + 1) - iT$ to $c - iT$. Using Lemma \ref{perron inversion}, we can write $R_{k - 1}(x)$ in terms of the integral \[\oint_{B_{c,T,M}} \frac{\zeta(s - 1)}{\zeta(s)}\frac{x^s}{s^k}\, ds,\] and in turn, Cauchy's residue theorem will provide us with an expression for $R_{k - 1}(x)$ in terms of the zeros of $\zeta(s)$ up to height $T$. 

We now provide a brief overview of the remainder of this subsection. First, Lemma \ref{lem:disappearing_integral} will allow us to ``ignore" the integral along $L_3$ by sending $M \to \infty$ through the nonnegative integers. Then, Lemmas \ref{lem:neg_inf_to_neg_1/12}, \ref{lem:2_to_c}, and \ref{lem:neg_1/12_to_2} will allow us to bound the integral along $L_2$ and $L_4$ under the assumption that the height $T$ belongs to the subsequence defined in Lemma \ref{lem:specific_sequence_heights}. Finally, combining these results with our estimate from Lemma \ref{perron inversion} will provide us with a preliminary estimate on $R_{k-1}(x)$ in Proposition \ref{prop:estimate_specific_sequence}. With this overview in mind, we proceed to our computations.

\begin{lem} \label{lem:disappearing_integral}
For any $x \geq 2$ and $T \geq 1$, we have \[\lim_{M \to \infty}\int_{-(2M+1)-iT}^{-(2M+1)+iT}\frac{\zeta(s-1)}{\zeta(s)}\frac{x^s}{s^k} \, ds = 0,\] where $M$ tends to infinity over the nonnegative integers.
\end{lem}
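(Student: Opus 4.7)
The plan is to directly estimate the integrand on the vertical segment $s = -(2M+1)+it$, $|t|\leq T$, and show that the uniform bound vanishes as $M\to\infty$. The essential observation is that $-(2M+1)$ is an odd negative integer, lying at distance $1$ from every trivial zero of $\zeta(s)$ (the negative even integers), so the functional equation gives clean control on both $\zeta(s)$ and $\zeta(s-1)$.

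First, I would apply the functional equation $\zeta(s)=2^{s}\pi^{s-1}\sin(\pi s/2)\Gamma(1-s)\zeta(1-s)$ to both $\zeta(s)$ and $\zeta(s-1)$, take the quotient, and simplify using $\Gamma(2-s)/\Gamma(1-s)=1-s$ together with $\sin(\pi(s-1)/2)=-\cos(\pi s/2)$. This yields
\[
\frac{\zeta(s-1)}{\zeta(s)} \;=\; -\frac{1-s}{2\pi}\,\cot\!\bigl(\pi s/2\bigr)\,\cdot\,\frac{\zeta(2-s)}{\zeta(1-s)}.
\]

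Next, I would bound each factor on the segment. Since $|1-s|=|(2M+2)-it|\leq 2M+2+T$, the polynomial factor contributes $O(M+T)$. For the cotangent, a direct computation using that $-(2M+1)$ is an odd integer gives $\sin(\pi s/2)=\pm\cosh(\pi t/2)$ and $\cos(\pi s/2)=\mp i\sinh(\pi t/2)$, so $|\cot(\pi s/2)|=|\tanh(\pi t/2)|\leq 1$; this is exactly where the choice of odd integers prevents $\sin(\pi s/2)$ from vanishing. Finally, because $\Re(1-s)=2M+2$ and $\Re(2-s)=2M+3$, both $\zeta(1-s)$ and $\zeta(2-s)$ lie within $O(2^{-(2M+2)})$ of $1$ by the absolutely convergent Dirichlet series, so the ratio $\zeta(2-s)/\zeta(1-s)$ is $O(1)$ (in fact $\to 1$).

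Combining these bounds with $|x^{s}|=x^{-(2M+1)}$ and $|s^{-k}|\leq(2M+1)^{-k}$, and integrating over the segment of length $2T$, the integral is bounded by
\[
O\Bigl(T\,(M+T)\,M^{-k}\,x^{-(2M+1)}\Bigr).
\]
Since $k\geq 2$, $x\geq 2$, and $T$ is fixed, each of $(M+T)M^{-k}$ and $x^{-(2M+1)}$ tends to $0$ as $M\to\infty$, so the whole bound vanishes. The proof is largely bookkeeping; the only genuinely substantive ingredient is the observation that taking odd negative integers $-(2M+1)$ for the left boundary, together with the functional equation, produces the uniform bound $|\cot(\pi s/2)|\leq 1$ that tames what would otherwise be the most delicate factor.
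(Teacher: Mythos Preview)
Your proof is correct and follows essentially the same approach as the paper: both use the functional equation to show $\zeta(s-1)/\zeta(s)\ll 1-s$ on the segment, then combine with the trivial bounds $|x^s|=x^{-(2M+1)}$ and $|s|\geq 2M+1$ to see the integral vanishes. Your version is a bit more explicit in isolating the factor $\cot(\pi s/2)$ and showing $|\cot(\pi s/2)|=|\tanh(\pi t/2)|\leq 1$ at the odd integer abscissa, which makes precise why $-(2M+1)$ (rather than an arbitrary left boundary) is the right choice; the paper handles the same point more tersely by asserting that $|\sin(\pi(1-u)/2)|$ is bounded below.
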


\begin{proof}
Applying the change of variables $u := 1 - s$ and the functional equation for the Riemann zeta function gives us \[\frac{\zeta(s - 1)}{\zeta(s)} = \frac{\zeta(-u)}{\zeta(1 - u)} = \frac{2^{-u}\pi^{-1 - u}\sin(-\pi u/2)\Gamma(u + 1)\zeta(u + 1)}{2^{1 - u}\pi^{-u}\sin(\pi(1 - u)/2)\Gamma(u)\zeta(u)}.\] Since $\Im(u)$ is bounded, $|\sin(-\pi u/2)|$ is bounded above. Meanwhile, we can write \[\sin \frac{\pi(1 -u)}{2} = \frac{1}{2i}\left(e^{i\pi(1 - u)/2} - e^{-i\pi(1 - u)/2}\right),\] from which we deduce that $|\sin(\pi(1 - u)/2)|$ is bounded below.

Meanwhile, $\Gamma(u + 1)/\Gamma(u) = u$. Finally, $|\zeta(u + 1)|$ is bounded above and $|\zeta(u)|$ is bounded below, as $\Re(u)$ is bounded above by $1$. Combining these estimates, we have \[\frac{\zeta(s - 1)}{\zeta(s)} \ll u = 1 - s.\] Since $(1 - s)/s \ll 1$, we have \[\int_{-(2M + 1) - iT}^{-(2M + 1) + iT} \frac{\zeta(s - 1)}{\zeta(s)}\frac{x^s}{s^k}\, ds \ll \int_{-(2M + 1) - iT}^{-(2M + 1) + iT} \left|\frac{x^s}{s^{k - 1}}\right|\, |ds| \ll \frac{Tx^{-(2M + 1)}}{(2M + 1)^{k - 1}}.\] As $M \to \infty$, the right-hand side clearly approaches $0$.
\end{proof}

We will now consider the integral along segments $L_2$ and $L_4$. Note that it suffices to consider the integral along $L_4$, as the same estimate will apply to the integral along $L_2$. We break $L_4$ into three parts. First, we bound the integral from $-(2M + 1) - iT$ to $-1/4 - iT$. 

\begin{lem} \label{lem:neg_inf_to_neg_1/12}
For any $x \geq 2$ and $T \geq 1$, we have \[\int_{-(2M + 1) - iT}^{-1/4 - iT} \frac{\zeta(s - 1)}{\zeta(s)}\frac{x^s}{s^k}\, ds = O\left(\frac{1}{T^{k - 1}x^{1/4}\log x}\right).\] 
\end{lem}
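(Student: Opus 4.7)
My plan is to obtain a pointwise bound on the integrand $\zeta(s-1)x^s/(\zeta(s)s^k)$ along the segment and then exploit the rapid decay of $x^\sigma$ for $\sigma \leq -1/4$ to control the resulting integral. To bound $\zeta(s-1)/\zeta(s)$ on this horizontal segment, where $\Re(s) \leq -1/4$, I would apply the functional equation $\zeta(s) = \chi(s)\zeta(1-s)$, with $\chi(s) := 2^s\pi^{s-1}\sin(\pi s/2)\Gamma(1-s)$, and the analogous relation $\zeta(s-1) = \chi(s-1)\zeta(2-s)$, to write
\[\frac{\zeta(s-1)}{\zeta(s)} = \frac{\chi(s-1)}{\chi(s)}\cdot\frac{\zeta(2-s)}{\zeta(1-s)}.\]
On the segment, $\Re(1-s) \geq 5/4$ and $\Re(2-s) \geq 9/4$, so both $\zeta(1-s)$ and $\zeta(2-s)$ lie in the half-plane of absolute convergence of the Dirichlet series; each is bounded above and bounded away from zero by absolute constants, so $\zeta(2-s)/\zeta(1-s) = O(1)$.

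Next, using $\sin(\pi(s-1)/2) = -\cos(\pi s/2)$ and $\Gamma(2-s) = (1-s)\Gamma(1-s)$, a direct computation gives
\[\frac{\chi(s-1)}{\chi(s)} = -\frac{1-s}{2\pi}\cot(\pi s/2).\]
Expanding $\sin$ and $\cos$ of $\pi s/2 = \pi\sigma/2 - i\pi T/2$ in terms of $\sinh(\pi T/2)$ and $\cosh(\pi T/2)$ (where $\sigma = \Re s$) yields $|\cot(\pi s/2)| \leq \coth(\pi T/2) \leq \coth(\pi/2)$ uniformly in $\sigma$, for $T \geq 1$. Therefore $|\zeta(s-1)/\zeta(s)| \ll |1-s| \ll |\sigma| + T$ along the entire segment.

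Combining this with the elementary estimates $|s| \geq \max(|\sigma|, T) \gg |\sigma| + T$ and $|s|^k \geq |s|\cdot T^{k-1}$, the integrand satisfies
\[\left|\frac{\zeta(s-1)}{\zeta(s)}\cdot\frac{x^s}{s^k}\right| \ll \frac{(|\sigma|+T)\,x^\sigma}{|s|\,T^{k-1}} \ll \frac{x^\sigma}{T^{k-1}}.\]
I would then integrate over $\sigma \in [-(2M+1), -1/4]$, bounding this above by the improper integral $\int_{-\infty}^{-1/4} x^\sigma\,d\sigma = x^{-1/4}/\log x$, to obtain the claimed bound $O(1/(T^{k-1}x^{1/4}\log x))$, uniformly in $M$.

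The main obstacle will be securing the estimate $|\zeta(s-1)/\zeta(s)| \ll |s|$ uniformly in $\sigma$ along the (potentially very long) horizontal segment; once the cotangent factor is pinned down by an absolute constant via the explicit $\sinh$/$\cosh$ computation, the remaining integration is routine, and the exponential decay of $x^\sigma$ as $\sigma \to -\infty$ guarantees that the bound does not degrade as $M \to \infty$.
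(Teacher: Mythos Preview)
Your proposal is correct and follows essentially the same route as the paper: apply the functional equation to obtain $\zeta(s-1)/\zeta(s) \ll 1-s$ on the segment, absorb the factor $1-s$ into one copy of $s$ from $s^k$ to get the bound $1/T^{k-1}$, and then integrate $x^\sigma$ over $\sigma \leq -1/4$. Your treatment of the trigonometric ratio via the explicit $\cot(\pi s/2)$ and the $\coth$ bound is a bit more careful than the paper's terse ``$\sin(\sigma+it)\asymp e^{t}$,'' but the argument is otherwise identical.
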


\begin{proof}
By the functional equation for the zeta function, \[\frac{\zeta(s - 1)}{\zeta(s)} = \frac{2(2\pi)^{s - 2}\Gamma(2 - s)\sin(\pi(s - 1)/2)\zeta(2 - s)}{2(2\pi)^{s - 1}\Gamma(1 - s)\sin(\pi s/2)\zeta(1 - s)}.\] We have that $\Gamma(2 - s) = (1 - s)\Gamma(1 - s)$, $\sin(\sigma + it) \asymp e^{t}$ for large $t$, and $|\zeta(2 - s)| \ll 1$ and $|\zeta(1 - s)| \gg 1$. Combining these estimates gives \[\frac{\zeta(s - 1)}{\zeta(s)} \ll 1 - s.\] Therefore, we have \[\int_{-(2M + 1) - iT}^{-1/4 - iT} \frac{\zeta(s - 1)}{\zeta(s)}\frac{x^s}{s^k}\, ds \ll \int_{-\infty - iT}^{-1/4 - iT} \left|\frac{x^s(s - 1)}{s^k}\right| \, |ds|.\] The desired result follows from the bounds  \[\left|\frac{s - 1}{s^k} \right| \leq \left(1 + \frac{1}{T}\right)\cdot \frac{1}{T^{k - 1}} \ll \frac{1}{T^{k - 1}} \quad \text{and}\quad \int_{-\infty - iT}^{-1/4 - iT} |x^s| |ds| \ll \int_{-\infty}^{-1/4} x^u \, du = \frac{x^{-1/4}}{\log x}. \qedhere\] 
\end{proof}

\begin{lem} \label{lem:2_to_c} 
For any $x \geq 2$ and $T \geq 1$, we have \[\int_{2-iT}^{c-iT} \frac{\zeta(s-1)}{\zeta(s)}\frac{x^s}{s^k} \, ds = O_\varepsilon\left(\frac{x^c}{T^{k-\varepsilon}}\right)\] for any $\varepsilon > 0$.
\end{lem}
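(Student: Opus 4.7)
The plan is to bound the integrand pointwise on the short horizontal segment parametrized by $s = \sigma - iT$ with $\sigma \in [2, c]$, then multiply by the fixed length $c - 2 = 1/4$. Because this segment lies well inside the half-plane of absolute convergence for $1/\zeta(s)$, the argument reduces to assembling the right convexity estimates in the correct place.

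For the factor $\zeta(s-1)/\zeta(s)$: the point $s - 1$ has real part $\sigma - 1 \in [1, 5/4] \subset [1/2, 3]$, so Lemma \ref{phragmen lindelof} immediately supplies $\zeta(s - 1) = O_\varepsilon(T^\varepsilon)$. Meanwhile, $\sigma \geq 2$ places $s$ inside the region where $1/\zeta(s) = \sum_n \mu(n)/n^s$ converges absolutely, giving $|1/\zeta(s)| \leq \zeta(2) = \pi^2/6 \ll 1$ with no logarithmic or $\varepsilon$-loss. Hence $|\zeta(s - 1)/\zeta(s)| \ll_\varepsilon T^\varepsilon$ uniformly on the segment.

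For the remaining factor $x^s/s^k$: since $x \geq 2$ we have $|x^s| = x^\sigma \leq x^c$, and $|s|^k \geq T^k$ because $|\Im(s)| = T$. Combining everything, the integrand is uniformly $O_\varepsilon(T^\varepsilon x^c / T^k)$ on the segment, and integrating over a segment of length $1/4$ produces the advertised bound $O_\varepsilon(x^c/T^{k - \varepsilon})$. There is no serious obstacle here; the estimate is a routine drop-in for the longer analysis of the segments $L_2$ and $L_4$ that follows. The only point worth flagging is that we do \emph{not} need to restrict $T$ to the special sequence of Lemma \ref{lem:specific_sequence_heights} on this short piece, because $\sigma \geq 2$ already makes $1/\zeta(s)$ trivially bounded; the special sequence will only become necessary on the segments where the real part drops toward or past the critical line.
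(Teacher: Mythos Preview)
Your proof is correct and follows essentially the same approach as the paper: bound $\zeta(s-1)$ via Lemma~\ref{phragmen lindelof}, bound $1/\zeta(s)$ trivially since $\Re(s)\ge 2$, and use $|x^s|\le x^c$, $|s|^k\ge T^k$ on a segment of fixed length. Your explicit justification that $|1/\zeta(s)|\le \zeta(2)$ via the M\"obius Dirichlet series, and your remark that the special sequence of Lemma~\ref{lem:specific_sequence_heights} is unnecessary here, are both nice clarifications but do not alter the argument.
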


\begin{proof}
By Lemma \ref{phragmen lindelof}, we have $\zeta(s - 1) = O_\varepsilon(T^\varepsilon)$ for $1 \leq \Re(s) - 1 \leq 2$, while $|\zeta(s)|$ is bounded below. Then \[\int_{2 - iT}^{c - iT} \frac{\zeta(s - 1)}{\zeta(s)}\frac{x^s}{s^k}\, ds \ll \frac{T^\varepsilon x^c}{T^k},\] and the desired result follows. 
\end{proof}

To complete our treatment of the integral along $L_4$, we provide an estimate along the segment from $-1/4 - iT$ to $2 - iT$. In what follows, the assumption that $T$ belongs to the sequence specified in Lemma \ref{lem:specific_sequence_heights} plays a crucial role. 

\begin{lem} \label{lem:neg_1/12_to_2}
Assume the Riemann hypothesis. For any $x \geq 2$ and some value of $T$ in each interval $[n, n + 1]$ for positive integers $n$, we have \[\int_{-1/4 - iT}^{2 - iT} \frac{\zeta(s - 1)}{\zeta(s)} \frac{x^s}{s^k} ds =
O_\varepsilon\left(\frac{x^2}{T^{k - \varepsilon}} + \frac{x^{5/8}}{T^{k - 7/4 - \varepsilon}} + \frac{x^{3/2}}{T^{k - 7/8 - \varepsilon}}\right)\] for any $\varepsilon > 0$. 
\end{lem}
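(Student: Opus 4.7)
The plan is to split the horizontal segment from $-1/4 - iT$ to $2 - iT$ into three sub-segments at $\sigma = 5/8$ and $\sigma = 3/2$, so that each sub-segment contributes exactly one of the three error terms. On each piece, I would bound the integrand by combining three ingredients: (i) the estimate $1/|\zeta(\sigma + iT)| \ll_\varepsilon T^\varepsilon$ valid on $-1 \leq \sigma \leq 2$, supplied by Lemma \ref{lem:specific_sequence_heights}, which crucially requires $T$ to belong to the subsequence $T_n \in [n, n+1]$; (ii) the Phragm\'en--Lindel\"of bounds of Lemma \ref{phragmen lindelof} applied to $\zeta(s - 1)$, giving $|\zeta(s - 1)| \ll_\varepsilon T^\varepsilon$ for $3/2 \leq \sigma \leq 2$ (since then $\sigma - 1 \in [1/2, 1]$) and $|\zeta(s - 1)| \ll_\varepsilon T^{3/2 - \sigma + \varepsilon}$ for $-1/4 \leq \sigma \leq 3/2$ (since then $\sigma - 1 \in [-5/4, 1/2]$); and (iii) the pointwise bound $|s^k| \geq T^k$ along the horizontal segment.

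After these substitutions, the integrand is dominated, up to factors of $T^\varepsilon$, by $x^\sigma/T^k$ on $[3/2, 2]$ and by $T^{3/2 - k}(x/T)^\sigma$ on $[-1/4, 3/2]$. Since each sub-interval has bounded length, the integral over each is comparable to the maximum of its integrand, which occurs at an endpoint by monotonicity of $(x/T)^\sigma$. The piece $[3/2, 2]$ gives the bound $x^2/T^{k - \varepsilon}$ at $\sigma = 2$. The piece $[5/8, 3/2]$ produces two candidate endpoint values, namely $x^{3/2}/T^{k - \varepsilon}$ when $x \geq T$ and $x^{5/8}/T^{k - 7/8 - \varepsilon}$ when $x \leq T$, both of which are subsumed by the single bound $x^{3/2}/T^{k - 7/8 - \varepsilon}$. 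The piece $[-1/4, 5/8]$ analogously yields endpoint values $x^{5/8}/T^{k - 7/8 - \varepsilon}$ and $x^{-1/4}/T^{k - 7/4 - \varepsilon}$, both bounded by $x^{5/8}/T^{k - 7/4 - \varepsilon}$ (using $x \geq 2$ for the latter). Summing the three contributions yields the stated bound.

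The main obstacle is less analytical than combinatorial: choosing the split points $5/8$ and $3/2$ so that the endpoint extremizations produce precisely the three listed error terms, and verifying that the $x \geq T$ and $x \leq T$ regimes can be merged into a uniform bound on each piece. Everything else is an immediate application of the Phragm\'en--Lindel\"of convexity principle combined with the zeta-free-line bounds provided by Lemmas \ref{lem:specific_sequence_heights} and \ref{phragmen lindelof}.
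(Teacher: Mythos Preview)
Your proposal is correct and follows essentially the same approach as the paper: split the segment at $\sigma = 5/8$ and $\sigma = 3/2$, invoke Lemma~\ref{lem:specific_sequence_heights} for $1/\zeta(s)$ and Lemma~\ref{phragmen lindelof} for $\zeta(s-1)$, and bound $|s|^{-k}$ by $T^{-k}$. The only cosmetic difference is that the paper bounds $x^\sigma$ and $T^{3/2-\sigma}$ separately by their maxima on each sub-interval rather than tracking the product $(x/T)^\sigma$ through the two regimes $x \gtrless T$, but this leads to the identical three terms.
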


\begin{proof}
Using Lemma \ref{lem:specific_sequence_heights}, we choose $T \in [n, n + 1]$ such that $1/\zeta(s) = O(T^{\varepsilon/2})$ for all $s = \sigma - iT$ with $-1 \leq \sigma \leq 2$. We now split the integral into two smaller integrals, where $\Re(s)$ ranges from $-1/4$ to $3/2$, and from $3/2$ to $2$. 

For $3/2 \leq \Re(s) \leq 2$, we have $|\zeta(s - 1)| = O_\varepsilon(T^{\varepsilon/2})$ by Lemma \ref{phragmen lindelof}. It follows that \[\int_{3/2 - iT}^{2 - iT} \frac{\zeta(s - 1)}{\zeta(s)} \frac{x^s}{s^k}\, ds \ll \frac{x^2}{T^{k - \varepsilon}}.\] 

For $-1/4 \leq \Re(s) \leq 3/2$, we have $|\zeta(s - 1)| = O_\varepsilon(T^{3/2 - \Re(s) + \varepsilon/2})$ by Lemma \ref{phragmen lindelof}. We then have \[\int_{-1/4 - iT}^{3/2 - iT} \frac{\zeta(s - 1)}{\zeta(s)}\frac{x^s}{s^k}\, ds = \left(\int_{-1/4 - iT}^{5/8 - iT} + \int_{5/8 - iT}^{3/2 - iT} \right) \frac{\zeta(s - 1)}{\zeta(s)}\frac{x^s}{s^k} \, ds \ll \frac{x^{5/8}}{T^{k - 7/4 - \varepsilon}} + \frac{x^{3/2}}{T^{k - 7/8 - \varepsilon}}. \qedhere\] 
\end{proof}

Having completely bounded our integral along the contour $B_{c, T, M}$, we are now ready to combine the preceding lemmas and state the main result of this subsection. 

\begin{prop}\label{prop:estimate_specific_sequence}
Assume the Riemann hypothesis and that all zeros of $\zeta(s)$ are simple. For any $x \geq 2$ and for some value of $T$ in each interval $[n,n+1]$ for positive integers $n$, we have \begin{align*}R_{k-1}(x) &=  \sum_{|\gamma| < T} \frac{\zeta(\rho-1)}{\zeta'(\rho)}\frac{x^\rho}{\rho^k} + O_\varepsilon\left(\frac{x}{T^{k - 1 - \varepsilon}} + \frac{x^c}{T^{k - \varepsilon}} + \frac{x^{5/8}}{T^{k - 7/4 - \varepsilon}} + \frac{x^{3/2}}{T^{k - 7/8 - \varepsilon}} + (\log x)^{k - 1}\right)\end{align*} for any $\varepsilon > 0$.
\end{prop}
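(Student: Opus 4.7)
The plan is to evaluate the truncated Perron integral from Lemma~\ref{perron inversion} by shifting the contour far to the left and collecting residues. Specifically, I would apply Cauchy's residue theorem to the function $\frac{\zeta(s-1)}{\zeta(s)}\cdot\frac{x^s}{s^k}$ on the positively oriented rectangle $B_{c,T,M}$, which expresses the integral along $L_1=[c-iT,c+iT]$ as $2\pi i$ times the sum of residues enclosed in $B_{c,T,M}$ minus the integrals along the remaining three segments $L_2, L_3, L_4$. I would then send $M\to\infty$ through the nonnegative integers, using Lemma~\ref{lem:disappearing_integral} to eliminate the contribution of $L_3$.

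The residues inside $B_{c,T,M}$ arise from four kinds of poles. The simple pole of $\zeta(s-1)$ at $s=2$ yields residue $\frac{1}{\zeta(2)}\cdot\frac{x^2}{2^k}=\frac{3x^2}{2^{k-1}\pi^2}$, cancelling exactly the $-\frac{3}{2^{k-1}\pi^2}x^2$ term produced by Lemma~\ref{perron inversion}. Each nontrivial zero $\rho$ with $|\gamma|<T$ (simple by hypothesis) is a simple pole of $1/\zeta(s)$, contributing $\frac{\zeta(\rho-1)}{\zeta'(\rho)}\cdot\frac{x^\rho}{\rho^k}$ and producing the main sum in the statement. The point $s=0$ is a pole of order $k$ of $1/s^k$ (noting $\zeta(0)=-1/2\neq0$), so its residue is the $(k-1)$-st Taylor coefficient of $\frac{\zeta(s-1)}{\zeta(s)}e^{s\log x}$ at $s=0$, a polynomial in $\log x$ of degree at most $k-1$ and thus $O((\log x)^{k-1})$. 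Finally, each trivial zero $s=-2m$ with $1\le m\le M$ contributes $\frac{\zeta(-2m-1)}{\zeta'(-2m)}\cdot\frac{x^{-2m}}{(-2m)^k}$; using the functional equation and Stirling's formula to show that $\frac{\zeta(-2m-1)}{\zeta'(-2m)}$ grows only polynomially in $m$, the series over trivial zeros converges absolutely (uniformly in $M$) to an $O(x^{-2})$ quantity, which is absorbed into the $O((\log x)^{k-1})$ error.

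For the horizontal segments, I would split $L_4$ at $s=-1/4-iT$ and $s=2-iT$, bounding the three resulting pieces by Lemmas~\ref{lem:neg_inf_to_neg_1/12}, \ref{lem:neg_1/12_to_2}, and~\ref{lem:2_to_c} respectively. The assumption that $T$ comes from the good subsequence of Lemma~\ref{lem:specific_sequence_heights} is essential here, since Lemma~\ref{lem:neg_1/12_to_2} requires control on $1/\zeta(s)$ along the line $\Im s=-T$. The integral along $L_2$ satisfies identical bounds by the symmetry $\zeta(\bar s)=\overline{\zeta(s)}$. Summing the errors from Lemma~\ref{perron inversion}, the six horizontal-segment bounds, the residue at $s=0$, and the trivial-zero series yields the stated asymptotic, after noting that Lemma~\ref{lem:neg_inf_to_neg_1/12}'s bound of $1/(T^{k-1}x^{1/4}\log x)$ is dominated by the $x/T^{k-1-\varepsilon}$ term already present. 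I expect the trickiest part to be the bookkeeping at $s=0$ and at the trivial zeros: one must justify the Taylor-coefficient estimate for the residue at $s=0$, verify uniform absolute convergence of the trivial-zero series in $M$, and confirm that passing to the limit $M\to\infty$ in the residue theorem is legitimate.
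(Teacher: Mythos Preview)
Your proposal is correct and follows essentially the same approach as the paper: apply the residue theorem to $\frac{\zeta(s-1)}{\zeta(s)}\frac{x^s}{s^k}$ on the rectangle $B_{c,T,M}$, identify the residues at $s=2$, $s=0$, the nontrivial zeros, and the trivial zeros, send $M\to\infty$ via Lemma~\ref{lem:disappearing_integral}, and bound the horizontal segments using Lemmas~\ref{lem:neg_inf_to_neg_1/12}, \ref{lem:neg_1/12_to_2}, and~\ref{lem:2_to_c}. Your treatment of the residue at $s=0$ and of the trivial-zero series is in fact slightly more detailed than the paper's (which simply records the bound $\bigl|\zeta(-2m-1)/\zeta'(-2m)\bigr|\le \zeta(2)(2m+1)/(2\pi)$ via the functional equation and asserts convergence).
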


\begin{proof}
Let $M$ be a nonnegative integer, and consider the positively oriented rectangle $B_{c, T, M}$ with vertices at $c - iT$, $c + iT$, $-(2M + 1) + iT$, and $-(2M + 1) - iT$. Then Lemma \ref{perron inversion} can be rewritten as 
\begin{align*}
    R_{k - 1}(x) &= \frac{1}{2\pi i}\oint_{B_{c,T,M}} \frac{\zeta(s - 1)}{\zeta(s)}\frac{x^s}{s^k}\, ds \\
    &- \frac{1}{2\pi i}\left(\int_{c + iT}^{-(2M + 1) + iT} + \int_{-(2M + 1) + iT}^{-(2M + 1) - iT} + \int_{-(2M + 1) - iT}^{c - iT}\right)\frac{\zeta(s - 1)}{\zeta(s)}\frac{x^s}{s^k}\, ds \\
    &- \frac{3}{2^{k - 1}\pi^2}x^2 + O\left(\frac{x^c}{T^k} + \frac{x}{T^{k - 1 - \varepsilon}}\right).
\end{align*}
The function $\zeta(s - 1)x^s/\zeta(s)s^k$ has poles at $0$, $2$, and the zeros of the zeta function in the interior of $B_{c,T,M}$, so Cauchy's residue theorem gives that \begin{align*}
    \frac{1}{2\pi i}\oint_{B_{c,T,M}} \frac{\zeta(s - 1)}{\zeta(s)}\frac{x^s}{s^k}\, ds &= \sum_{|\gamma| < T} \frac{\zeta(\rho - 1)}{\zeta'(\rho)}\frac{x^\rho}{\rho^k} + \frac{3}{2^{k - 1}\pi^2}x^2 \\
    &+ \sum_{m = 1}^M \frac{\zeta(-2m - 1)}{\zeta'(-2m)}\frac{x^{-2m}}{(-2m)^2} + O((\log x)^{k - 1}),
\end{align*}
where $O((\log x)^{k - 1})$ comes from the residue of the integrand at $s = 0$. 

A calculation using the functional equation for the Riemann zeta function gives that \[\left|\frac{\zeta(-2m - 1)}{\zeta'(2m)}\right| \leq \frac{1}{2\pi}(2m + 1) \frac{\zeta(2m + 2)}{\zeta(2m + 1)} \leq \frac{\zeta(2)(2m + 1)}{2\pi},\] so the sum over the trivial zeros converges as $M \to \infty$. 

Finally, when we take $M \to \infty$ through the nonnegative integers, the integral from $-(2M + 1) + iT$ to $-(2M + 1) - iT$ converges by Lemma \ref{lem:disappearing_integral}. Combining the bounds from Lemmas \ref{lem:neg_inf_to_neg_1/12}, \ref{lem:2_to_c}, and \ref{lem:neg_1/12_to_2} gives the desired result. 
\end{proof}

\subsection{Proof of Theorem \ref{intro_thm:limiting_distribution}}\label{subsec:proof_of_limdist}

In this subsection, we use our preliminary estimates to complete the proof of Theorem \ref{intro_thm:limiting_distribution}. First, we need to extend our estimate in Proposition \ref{prop:estimate_specific_sequence} so that it applies to an arbitrary height $T\geq 2$. As we will see in the following proposition, inspired by \cite[Lemma 5]{Ng04}, we will gain an extra error term upon generalizing to all $T \geq 2$, which we require the conjecture $J_{-1}(T) \ll T$ in order to bound.

\begin{prop} \label{main equation}
Assume the Riemann hypothesis and that $J_{-1}(T) \ll T$. For any $x \geq 2$ and $T \geq 2$, we have\begin{align*}R_{k-1}(x) &=  \sum_{|\gamma| < T} \frac{\zeta(\rho-1)}{\zeta'(\rho)}\frac{x^\rho}{\rho^k} \\
&\quad+ O_\varepsilon\left(\frac{x}{T^{k - 1 - \varepsilon}} + \frac{x^c}{T^{k - \varepsilon}} + \frac{x^{5/8}}{T^{k - 7/4 - \varepsilon}} + \frac{x^{3/2}}{T^{k - 7/8 - \varepsilon}} + (\log x)^{k - 1} + \frac{1}{T^{k - 2}}\left(\frac{x\log T}{T}\right)^{1/2}\right)\end{align*}
for any $\varepsilon > 0$.
\end{prop}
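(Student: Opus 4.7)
The plan is to reduce an arbitrary height $T \geq 2$ to a nearby height $T'$ in the Ng-type sequence produced by Lemma \ref{lem:specific_sequence_heights}. Concretely, take $n = \lfloor T \rfloor$ and let $T' \in [n, n+1]$ be a value for which the conclusion of Lemma \ref{lem:specific_sequence_heights} holds, so that $|T - T'| \leq 1$ and $T \asymp T'$. Applying Proposition \ref{prop:estimate_specific_sequence} at height $T'$ yields the main sum $\sum_{|\gamma| < T'} \zeta(\rho-1)x^\rho/(\zeta'(\rho)\rho^k)$, plus error terms in which every factor of $T'$ may be replaced by $T$ at the cost of absorbing an absolute constant.

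It remains to replace the summation range $|\gamma| < T'$ by $|\gamma| < T$, which introduces the correction
\[\Sigma := \sum_{\min(T,T') \leq |\gamma| < \max(T,T')} \frac{\zeta(\rho-1)}{\zeta'(\rho)} \cdot \frac{x^\rho}{\rho^k},\]
a sum over zeros in a $\gamma$-window of width at most $1$ at height $\asymp T$. Under the Riemann hypothesis, $|x^\rho| = x^{1/2}$ and $|\rho|^k \gg T^k$. The key local ingredient is the estimate $|\zeta(\rho-1)| \ll T$: writing $\rho - 1 = -1/2 + i\gamma$ and invoking the functional equation together with Stirling's formula, the exponential growth coming from $\sin(\pi(s-1)/2)$ is cancelled by the exponential decay of $\Gamma(2-s)$, leaving a polynomial factor of size $|\gamma| \asymp T$; this is essentially the computation already appearing in Lemma \ref{lem:neg_inf_to_neg_1/12}. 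Combining these, $|\Sigma| \ll (x^{1/2}T/T^k)\sum_{|\gamma - T| \leq 1} 1/|\zeta'(\rho)|$.

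For the remaining inner sum I would apply Cauchy--Schwarz and then feed in the two averaging estimates at our disposal:
\[\sum_{|\gamma - T| \leq 1} \frac{1}{|\zeta'(\rho)|} \leq \bigpar{\sum_{|\gamma - T| \leq 1} 1}^{1/2}\bigpar{\sum_{|\gamma - T| \leq 1}\frac{1}{|\zeta'(\rho)|^2}}^{1/2} \ll (\log T)^{1/2}\,J_{-1}(T+1)^{1/2} \ll (T\log T)^{1/2},\]
where the first factor uses von Mangoldt's count (\ref{vonMangoldt_J0}) of nontrivial zeros in a unit interval and the second uses the hypothesis $J_{-1}(T) \ll T$. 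Substituting gives $|\Sigma| \ll T^{-(k-2)}(x\log T/T)^{1/2}$, which is exactly the new error term appearing in the statement. The main obstacle of the argument is precisely this mean-value bound on $\sum 1/|\zeta'(\rho)|$ in a short $\gamma$-window: without the Gonek--Hejhal upper bound $J_{-1}(T) \ll T$, the short-interval sum could plausibly be too large to absorb, and the reduction from the sparse sequence $\{T_n\}$ to arbitrary $T$ would fail.
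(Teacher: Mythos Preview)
Your argument is correct and follows essentially the same strategy as the paper: reduce an arbitrary $T$ to a nearby height $T'$ in the sequence of Lemma~\ref{lem:specific_sequence_heights}, apply Proposition~\ref{prop:estimate_specific_sequence} there, and bound the correction sum over zeros in a unit window using $|\zeta(\rho-1)| \ll T$ together with Cauchy--Schwarz and $J_{-1}(T)\ll T$. The only cosmetic difference is in the Cauchy--Schwarz split: the paper groups one factor of $1/\rho$ with $1/\zeta'(\rho)$ and invokes \cite[Lemma~1(ii)]{Ng04} to get the short-interval bound $\sum_{T\le\gamma\le T'}|\rho\zeta'(\rho)|^{-2}\ll 1/T$, which in fact yields an estimate a full power of $T$ sharper than the stated error term, whereas your split pulls $|\rho|^{-k}$ out pointwise and uses the global bound $J_{-1}(T+1)\ll T$ directly---simpler and more self-contained, landing exactly on the displayed error.
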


\begin{proof}
Pick an arbitrary $T \geq 2$ such that $n \leq T \leq n+1$ for some integer $n \geq 2$. Suppose without loss of generality that $T \leq T_n$, where $T_n \in [n,n+1]$ satisfies Lemma \ref{lem:specific_sequence_heights}. Then \begin{align*}R_{k-1}(x) &=  \sum_{|\gamma| < T_n} \frac{\zeta(\rho-1)}{\zeta'(\rho)}\frac{x^\rho}{\rho^k} - \sum_{T \leq |\gamma| < T_n} \frac{\zeta(\rho-1)}{\zeta'(\rho)}\frac{x^\rho}{\rho^k} \\ &\quad+ O_\varepsilon\left(\frac{x}{T^{k - 1 - \varepsilon}} + \frac{x^c}{T^{k - \varepsilon}} + \frac{x^{5/8}}{T^{k - 7/4 - \varepsilon}} + \frac{x^{3/2}}{T^{k - 7/8 - \varepsilon}} + (\log x)^{k - 1}\right),\end{align*} where we used $T \leq T_n \leq T+1$ to replace all occurrences of $T_n$ in the error term with $T$. 

By the Cauchy--Schwarz inequality, \begin{align*}\left|\sum_{T \leq \gamma \leq T_n} \frac{\zeta(\rho-1)x^\rho}{\rho^{k}\zeta'(\rho)}\right| &\ll \frac{x^{1/2}}{T^{k-1}}\left(\sum_{T \leq \gamma \leq T_n}\frac{1}{|\rho\zeta'(\rho)|^2}\right)^{1/2}\left(\sum_{T \leq \gamma \leq T_n} |\zeta(\rho-1)|^2\right)^{1/2}. \end{align*}

Under the Gonek--Hejhal conjecture $J_{-1}(T) \ll T$, \cite[Lemma 1(ii)]{Ng04} gives us \[\sum_{T \leq \gamma \leq T_n}\frac{1}{|\rho\zeta'(\rho)|^2} \ll \frac{1}{T}.\] Moreover, by Lemma \ref{phragmen lindelof}, \begin{align}\label{eqn:bound_on_zeta_nontrivialzeros} \zeta(\rho-1) \ll \zeta(2-\rho) \cdot \Im(\rho)^{\Re(2-\rho)-1/2} \ll T.\end{align} By (\ref{vonMangoldt_J0}), we have
\[
\left(\sum_{T \leq \gamma \leq T_n} |\zeta(\rho-1)|^2\right)^{1/2} \ll T\left(\sum_{T \leq \gamma \leq T_n} 1\right)^{1/2} \ll T(\log T)^{1/2},
\] 
and the desired result follows.
\end{proof}

Now we have written the error term $R_{k-1}(x)$ in terms of a finite sum over nontrivial zeros of the Riemann zeta function up to an arbitrary height $T \geq 2$ and an error term bounded in terms of $x$ and $T$. We are finally ready to apply the general framework of \cite{ANS14} to demonstrate that $R_{k-1}(x)$ possesses a limiting logarithmic distribution.

{
\renewcommand{\thethm}{\ref{intro_thm:limiting_distribution}}

\begin{thm}
Assume the Riemann hypothesis, that $J_{-1}(T) \ll T$, and that $|\zeta'(\rho)|^{-1} \ll |\rho|^{1/2 - \varepsilon}.$ The function \[\widetilde{R}_{k-1}(y) := e^{-y/2}R_{k-1}(e^y)\] has a limiting distribution when $k \geq 2$. More precisely, there exists a probability measure $\mu_{k-1}$ on the Borel subsets of $\R$ such that for every bounded continuous function $f: \R \to \R$, we have \[\lim_{Y \to \infty} \frac{1}{Y} \int_0^Y f(e^{-y/2}R_{k-1}(e^y))\, dy = \int_\R f(x) \, d\mu_{k-1}(x).\]
\end{thm}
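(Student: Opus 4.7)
The plan is to invoke \cite[Corollary 1.3]{ANS14}, which produces the desired limiting distribution once $\widetilde{R}_{k-1}$ is shown to be $B^2$-almost periodic. Guided by Proposition~\ref{main equation}, the natural approximants are the trigonometric polynomials
\[
P_T(y) := \sum_{|\gamma| < T} r(\gamma)\, e^{i\gamma y}, \qquad r(\gamma) := \frac{\zeta(\rho - 1)}{\zeta'(\rho)\, \rho^k},
\]
obtained by substituting $x = e^y$ and dividing by $x^{1/2}$. The goal is then to verify
\[
\lim_{T \to \infty}\limsup_{Y \to \infty} \frac{1}{Y}\int_0^Y |\widetilde{R}_{k-1}(y) - P_T(y)|^2 \, dy = 0.
\]

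First, I would establish that $\sum_\gamma |r(\gamma)|^2 < \infty$ for every $k \geq 2$. Combining $|\zeta(\rho-1)|\ll |\gamma|$ from Lemma~\ref{phragmen lindelof} and equation~(\ref{eqn:bound_on_zeta_nontrivialzeros}), the hypothesis $|\zeta'(\rho)|^{-1}\ll |\gamma|^{1/2-\varepsilon}$, and the trivial $|\rho|^{-k}\ll |\gamma|^{-k}$, we obtain $|r(\gamma)|^2 \ll |\gamma|^{3-2k-2\varepsilon}$; grouping zeros into unit intervals and using the von Mangoldt count~(\ref{vonMangoldt_J0}) yields convergence.

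Second, I would use Proposition~\ref{main equation} with the $Y$-dependent cutoff $T = T(Y) := e^{\alpha Y}$, choosing $\alpha = \alpha(k) > 0$ so that each of the six error terms, squared and averaged over $[0, Y]$ after division by $e^y$, tends to $0$ as $Y \to \infty$. A case analysis on the six terms confirms such $\alpha$ exists for every $k \geq 2$; at $k = 2$ the binding constraint comes from $x^{3/2}/T^{k - 7/8 - \varepsilon}$, forcing $\alpha > 8/9$. For a fixed $T_0$, the triangle inequality in $L^2(0,Y)$ gives
\[
\frac{1}{Y}\int_0^Y |\widetilde{R}_{k-1}(y) - P_{T_0}(y)|^2 \, dy \leq 2\cdot\frac{1}{Y}\int_0^Y |\widetilde{R}_{k-1}(y) - P_{T(Y)}(y)|^2 \, dy + 2\cdot\frac{1}{Y}\int_0^Y |P_{T(Y)}(y) - P_{T_0}(y)|^2 \, dy,
\]
and the first term on the right vanishes as $Y \to \infty$ by the choice of $\alpha$. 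For the second term, expanding the square gives a diagonal $\sum_{T_0 \leq |\gamma| < T(Y)} |r(\gamma)|^2$ --- converging to $\sum_{T_0 \leq |\gamma|} |r(\gamma)|^2$, small for large $T_0$ --- plus an off-diagonal contribution $O(\tfrac{1}{Y}\sum_{\gamma_1 \neq \gamma_2} |r(\gamma_1) r(\gamma_2)|/|\gamma_1 - \gamma_2|)$, which I would bound by a Hilbert-type inequality of Montgomery--Vaughan combined with the decay of $r(\gamma)$. Letting $T_0 \to \infty$ completes the verification, and \cite[Corollary 1.3]{ANS14} then produces the measure $\mu_{k-1}$.

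The main obstacle is uniformly bounding the off-diagonal sum in $Y$: one must balance the logarithmic density of nontrivial zeros against the decay $|r(\gamma)|\ll |\gamma|^{3/2-k-\varepsilon}$, which for $k = 2$ is only $|\gamma|^{-1/2-\varepsilon}$. The square-summability weighted by logarithmic factors provided by step one is just enough to render the off-diagonal $O(1/Y)$; coordinating the choice of $\alpha$ in the error-balancing step with this Montgomery--Vaughan estimate, especially in the borderline case $k = 2$, is the most delicate technical point of the argument.
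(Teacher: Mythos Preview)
Your approach is correct and lands on the same citation as the paper, but you are doing more work than necessary. The paper's proof simply verifies the hypotheses of \cite[Corollary~1.3]{ANS14} and stops: (i) with the single choice $T=e^{Y}$, each of the six terms in $\mathcal{E}(y,T)$ from Proposition~\ref{main equation}, after dividing by $e^{y/2}$, has squared $L^{2}$-mean over $[0,Y]$ tending to~$0$ (checked term by term via Cauchy--Schwarz on the six-term sum); (ii) combining \eqref{eqn:bound_on_zeta_nontrivialzeros} with the assumed bound $|\zeta'(\rho)|^{-1}\ll|\rho|^{1/2-\varepsilon}$ gives the pointwise decay $|r(\gamma)|\ll\gamma^{3/2-k-\varepsilon}$, whose exponent $k-3/2+\varepsilon$ exceeds $1/2$ for every $k\ge 2$; (iii) the count $\sum_{T\le\gamma\le T+1}1\ll\log T$ is von Mangoldt's estimate~\eqref{vonMangoldt_J0}. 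Corollary~1.3 then yields $B^{2}$-almost periodicity and the limiting distribution in one stroke.

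Your route instead partially unpacks the content of that corollary: the intermediate triangle-inequality splitting through $P_{T(Y)}$, the explicit square-summability of $r(\gamma)$, and the Montgomery--Vaughan off-diagonal estimate are precisely the ingredients ANS14 absorbs into their black box from hypotheses (ii) and (iii). In particular, the ``main obstacle'' you single out---controlling the off-diagonal sum uniformly in $Y$ at the borderline $k=2$---never needs to be confronted directly, since the pointwise bound $|r(\gamma)|\ll|\gamma|^{-1/2-\varepsilon}$ together with the zero-count is exactly the input that Corollary~1.3 is designed to process. Your argument would go through, but the paper's is shorter precisely because it lets the cited framework carry that delicate step.
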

\addtocounter{thm}{-1}
}

\begin{proof}
By Proposition \ref{main equation}, we can write \[\widetilde{R}_{k-1}(y) = 2\Re\left(\sum_{|\gamma| <T} \frac{\zeta(\rho-1)}{\zeta'(\rho)}\frac{e^{i\gamma y}}{\rho^k}\right) + \mathcal{E}(y,T),\] 
where \begin{align}
     \mathcal{E}(y,T) &= O_\varepsilon\left(\frac{e^{y/2}}{T^{k - 1 - \varepsilon}} + \frac{e^{y(c - 1/2)}}{T^{k - \varepsilon}} + \frac{e^{y/8}}{T^{k - 7/4 - \varepsilon}} + \frac{e^y}{T^{k - 7/8 - \varepsilon}} + y^{k - 1}e^{-y/2} + \frac{1}{T^{k - 2}}\left(\frac{\log T}{T}\right)^{1/2}\right). \label{this is messy}
\end{align}

First, we must show that \[\lim_{Y \to \infty} \frac{1}{Y} \int_0^Y |\mathcal{E}(y,e^Y)|^2 \, dy = 0.\] For the sake of convenience, we index each summand in (\ref{this is messy}) as $\mathcal{E}_i$ for $i = 1,\ldots,6$ so that \[\lim_{Y \to \infty} \frac{1}{Y} \int_0^Y |\mathcal{E}(y,e^Y)|^2 \, dy = \lim_{Y \to \infty} O\left(\frac{1}{Y} \int_0^Y \left|\sum_{i=1}^{11} \mathcal{E}_i(y,e^Y)\right|^2\ dy\right).\]

By the Cauchy--Schwarz inequality, we have \[\left|\sum_{i=1}^{6} \mathcal{E}_i(y,e^Y)\right|^2 \leq \left(\sum_{i = 1}^6 |\mathcal{E}_i(y,e^Y)|\right)^2 \leq 6 \sum_{i = 1}^n |\mathcal{E}_i(y,e^Y)|^2,\] so it suffices to show that \[\lim_{Y \to \infty} \frac{1}{Y} \int_0^Y |\mathcal{E}_i(y, e^Y)|^2 \, dy = 0\] for each individual error term $\mathcal{E}_i$ in the sum. This is clearly true for $y^{k - 1}e^{-y/2}$; meanwhile, all other terms are bounded above by $e^{-d_iY}$ for some constant $d_i > 0$ as long as $\varepsilon < \min(5/2 - c, 1/8)$, which gives the desired result. 

Now recall that \[\sum_{T \leq \gamma \leq T+1} 1 \ll \log T.\] Under the assumption of conjecture (\ref{eqn:reciprocal_zetaprime_nontrivial_zeros}), i.e., $|\zeta'(\rho)|^{-1} \ll \gamma^{1/2-\varepsilon},$ we have \[ \frac{1}{\zeta'(\rho)\rho^k} \ll \frac{1}{\gamma^{k}}\frac{1}{\zeta'(\rho)} \ll \frac{1}{\gamma^{k-1/2+\varepsilon}}\] Combining with the bound (\ref{eqn:bound_on_zeta_nontrivialzeros}) on $\zeta(\rho-1)$, we deduce \[\frac{\zeta(\rho-1)}{\zeta'(\rho)\rho^k}  \ll \gamma^{3/2-k-\varepsilon}.\] In particular, notice that $k - 3/2 + \varepsilon > 1/2$ for $k \geq 2$, so  \cite[Corollary 1.3]{ANS14} tells us that $\widetilde{R}_{k-1}(y)$ is a $B^2$-almost periodic function and therefore possesses a limiting distribution.
\end{proof}

Applying \cite[Theorem 1.9]{ANS14} and assuming the Linear Independence hypothesis, we can also explicitly compute the Fourier transform of the limiting distribution $\mu_{k-1}$.

{
\renewcommand{\thethm}{\ref{intro_cor:fourier_transform}}

\begin{cor}
Under the Linear Independence hypothesis and the same assumptions as Theorem \ref{intro_thm:limiting_distribution}, the Fourier transform of $\mu_{k-1}$ exists and is given by \[\widehat{\mu}_{k-1}(y) = \prod_{\gamma > 0}\mathcal{J}\left(\frac{2\zeta\left(-\frac{1}{2} + i\gamma\right)y}{\zeta'\left(\frac{1}{2} + i\gamma\right)\left(\frac{1}{2}+i\gamma\right)^k}\right) ,\] where \[\mathcal{J}(z) = \sum_{r=0}^\infty \frac{(-1)^r(z/2)^{2r}}{(r!)^2}\] is the Bessel function of the first kind. 
\end{cor}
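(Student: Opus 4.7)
My plan is to apply \cite[Theorem 1.9]{ANS14} to the $B^2$-almost periodic function $\widetilde{R}_{k-1}$ constructed in the proof of Theorem \ref{intro_thm:limiting_distribution}. Essentially all of the analytic groundwork has already been laid---the $B^2$-almost periodic representation, the coefficient decay bound, and the existence of $\mu_{k-1}$---so what remains is largely a direct plug-in under the Linear Independence hypothesis together with the standard computation that turns a uniform average of a complex exponential into a Bessel function.

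First, I would extract from the proof of Theorem \ref{intro_thm:limiting_distribution} the Fourier-type expansion
\[\widetilde{R}_{k-1}(y) \;\sim\; 2\sum_{\gamma > 0}\re\bigpar{a_\gamma\, e^{i\gamma y}}, \qquad a_\gamma := \frac{\zeta(-\tfrac{1}{2}+i\gamma)}{\zeta'(\tfrac{1}{2}+i\gamma)\,(\tfrac{1}{2}+i\gamma)^k},\]
obtained by pairing each nontrivial zero $\rho = \tfrac{1}{2}+i\gamma$ with its complex conjugate. The decay estimate $|a_\gamma| \ll \gamma^{3/2-k-\varepsilon}$ already established for $k \geq 2$ gives $\sum_{\gamma > 0} |a_\gamma|^2 < \infty$, which is exactly the summability hypothesis required to invoke \cite[Theorem 1.9]{ANS14}.

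Next, I would feed in the Linear Independence hypothesis. Linear independence of $\{\gamma : \gamma > 0,\ \zeta(\tfrac{1}{2}+i\gamma) = 0\}$ over $\Q$ makes the phase vector $(\gamma y \bmod 2\pi)_{\gamma > 0}$ equidistribute on the infinite-dimensional torus by Weyl's criterion, and \cite[Theorem 1.9]{ANS14} then identifies $\mu_{k-1}$ with the distribution of the random series $\sum_{\gamma > 0}\bigpar{a_\gamma Z_\gamma + \overline{a_\gamma Z_\gamma}}$, where the $Z_\gamma$ are independent uniform random variables on the unit circle. By independence the characteristic function of $\mu_{k-1}$ factors across $\gamma$, so the problem reduces to computing, for a single $\gamma$, the one-dimensional characteristic function of $a_\gamma Z_\gamma + \overline{a_\gamma Z_\gamma}$.

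Finally, I would compute each factor by expanding the exponential in a power series and integrating termwise:
\[\frac{1}{2\pi}\int_0^{2\pi}\exp\bigpar{iy\bigpar{a_\gamma e^{i\theta} + \overline{a_\gamma}\,e^{-i\theta}}}\,d\theta = \sum_{r=0}^{\infty} \frac{(-1)^r(a_\gamma y)^r(\overline{a_\gamma}\,y)^r}{(r!)^2},\]
where only the diagonal terms survive because $\int_0^{2\pi} e^{i(n-m)\theta}\,d\theta = 2\pi\,\indic{n=m}$. The resulting series matches the stated Bessel factor once one uses that $\mathcal{J}$ depends only on the square of its argument. The single remaining check is convergence of the infinite product over $\gamma$, which I expect to be routine: the estimate $\mathcal{J}(z) = 1 + O(|z|^2)$ near $z = 0$ combined with $\sum_{\gamma>0}|a_\gamma|^2 < \infty$ makes $\sum_{\gamma>0}|1 - \mathcal{J}(2a_\gamma y)|$ summable locally uniformly in $y$. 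I do not anticipate any real obstacle beyond this bookkeeping, as the delicate estimates on $a_\gamma$ were the substance of Theorem \ref{intro_thm:limiting_distribution}.
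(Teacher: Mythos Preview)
Your proposal is correct and takes essentially the same approach as the paper: both simply invoke \cite[Theorem 1.9]{ANS14} after the $B^2$-almost periodic structure and coefficient bounds have been established in Theorem \ref{intro_thm:limiting_distribution}. The paper's own argument is in fact just a one-line citation of that theorem, whereas you additionally unpack its mechanism (equidistribution on the torus under Linear Independence, factorization of the characteristic function, and the Bessel integral); this extra detail is sound but not required.
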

\addtocounter{thm}{-1}
}

\section{Bounds on the Error Term}\label{sec:error_bounds}

In this section, we provide estimates on the size of the normalized error term $x^{-1/2}R_{k-1}(x)$. In particular, we show that the normalized error term is bounded for $k \geq 3$, as the sum \[\sum_{\rho} \frac{\zeta(\rho-1)}{\zeta'(\rho)\rho^k}x^{i\gamma},\] taken over the nontrivial zeros of the Riemann zeta function, converges. On the other hand, in line with the unconditional lower bound provided in \cite[Theorem 1.1]{KWsmooth10}, we show that $x^{-1/2}R_1(x)$ is unbounded under the additional assumption of the Linear Independence hypothesis. 

{
\renewcommand{\thethm}{\ref{intro_thm:sum_bounded}}

\begin{thm}
Assume the Riemann hypothesis and that $J_{-1}(T) \ll T$. For all positive integers $k \geq 3$, the normalized error term $x^{-1/2}R_{k - 1}(x)$ is bounded. Meanwhile, for $k = 2$, under the additional assumption of the Linear Independence hypothesis, the normalized error term $x^{-1/2}R_1(x)$ is unbounded.
\end{thm}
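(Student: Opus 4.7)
The plan is to handle the two cases $k \geq 3$ and $k=2$ separately, using Proposition \ref{main equation} as the principal input in both.

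For $k \geq 3$, I would apply Proposition \ref{main equation} with $T = x$ and verify that each term in $\mathcal{E}(y, T)$ is $O(x^{1/2})$; this is a routine check of exponents (in fact true for any $k \geq 2$). It then suffices to bound the main sum $\sum_{|\gamma|<T} b_\rho x^\rho$ in absolute value by $x^{1/2}\sum_\rho |b_\rho|$, where $b_\rho := \zeta(\rho-1)/(\zeta'(\rho)\rho^k)$, and to show that $\sum_\rho |b_\rho|$ converges absolutely. I would do this via a dyadic Cauchy--Schwarz argument. In the range $T \leq |\gamma| < 2T$, Lemma \ref{phragmen lindelof} gives $|\zeta(\rho-1)| \ll T$, the von Mangoldt estimate (\ref{vonMangoldt_J0}) gives $O(T\log T)$ zeros, and the assumption $J_{-1}(T) \ll T$ gives $\sum 1/|\zeta'(\rho)|^2 \ll T$, so
\[\sum_{T \leq |\gamma| < 2T} |b_\rho| \ll \left(\frac{T^2 \cdot T\log T}{T^{2k}}\right)^{1/2} \cdot T^{1/2} = T^{2-k}\sqrt{\log T}.\]
Summing over dyadic scales $T = 2^j$ converges precisely when $k > 2$, i.e., $k \geq 3$, giving $R_{k-1}(x) = O(x^{1/2})$.

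For $k = 2$ under the Linear Independence hypothesis, the plan is to work through the limiting distribution. Theorem \ref{intro_thm:limiting_distribution} and Corollary \ref{intro_cor:fourier_transform} together identify $\mu_1$ with the law of $X = \sum_{\gamma > 0} 2|b_\rho|\cos U_\rho$, with $U_\rho$ iid uniform on $[0, 2\pi)$. Since each summand is bounded in absolute value by $2|b_\rho|$, a Minkowski-sum argument (equivalently, Kronecker's simultaneous Diophantine approximation theorem under LI) shows that $\operatorname{supp}(\mu_1)$ equals $[-S, S]$ where $S = \sum_\rho 2|b_\rho| \in (0, \infty]$. As $\widetilde{R}_1$ is bounded if and only if $\operatorname{supp}(\mu_1)$ is compact, the problem reduces to showing $\sum_\rho |b_\rho| = \infty$.

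The main obstacle is establishing this divergence. The dyadic estimate above only yields the upper bound $\sum_{T \leq |\gamma|<2T}|b_\rho| \ll \sqrt{\log T}$ in the case $k=2$, which by itself is consistent with either convergence or divergence of the full series. To close the argument I would appeal to the unconditional lower bound $R_1(x) = \Omega_\pm(x^{1/2}\log\log\log x)$ of Kaczorowski--Wiertelak \cite{KWsmooth10}: if $\sum_\rho |b_\rho|$ were finite, then passing $T \to \infty$ in Proposition \ref{main equation} (all its error terms vanish for fixed $x$) would give the pointwise representation $R_1(x) = \sum_\rho b_\rho x^\rho$, hence $R_1(x) = O(x^{1/2})$, contradicting the Omega result. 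Therefore $\sum_\rho |b_\rho| = \infty$, and $\widetilde{R}_1$ is unbounded.
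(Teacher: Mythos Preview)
Your treatment of $k\geq 3$ is essentially the paper's: both show $\sum_\rho |b_\rho|<\infty$ via Cauchy--Schwarz together with $J_{-1}(T)\ll T$. The paper applies one global Cauchy--Schwarz, splitting off $\sum \gamma^{-2}$ and then handling $\sum |\zeta'(\rho)|^{-2}\gamma^{-2(k-2)}$ by partial summation, whereas you run the same estimate dyadically; there is no substantive difference.

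For $k=2$ the routes diverge, and your proposal has a gap. You invoke Theorem~\ref{intro_thm:limiting_distribution} and Corollary~\ref{intro_cor:fourier_transform} to identify $\mu_1$ and thereby reduce to showing $\sum_\rho|b_\rho|=\infty$, but those two results require the additional bound $|\zeta'(\rho)|^{-1}\ll|\rho|^{1/2-\varepsilon}$, which is \emph{not} among the hypotheses of Theorem~\ref{intro_thm:sum_bounded}. Without that bound you have not established that $\widetilde{R}_1$ is $B^2$-almost periodic, so the limiting-distribution machinery is unavailable here. (A smaller point: ``$\widetilde{R}_1$ bounded $\Leftrightarrow$ $\operatorname{supp}\mu_1$ compact'' is not an equivalence in general, since $\widetilde{R}_1$ could be unbounded on a set of density zero; only the implication you actually need---unbounded support forces unbounded $\widetilde{R}_1$---holds.)

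The paper instead works directly with finite partial sums: under LI, Kronecker's theorem gives, for each fixed $T$,
\[\limsup_{x\to\infty}\Bigl|\sum_{|\gamma|<T} b_\rho\, x^{i\gamma}\Bigr|\ \gg\ \sum_{0<\gamma<T}\frac{1}{|\zeta'(\rho)|\,\gamma},\]
and then cites the divergence of the right-hand side as $T\to\infty$ as a known fact. Your idea of deducing $\sum_\rho|b_\rho|=\infty$ by contradiction with the unconditional Kaczorowski--Wiertelak bound $R_1(x)=\Omega_\pm(x^{1/2}\log\log\log x)$ is correct on its own and is a neat alternative to that citation; grafting it onto the paper's Kronecker-on-partial-sums step (rather than onto Theorem~\ref{intro_thm:limiting_distribution}) would repair your argument without importing the extra hypothesis on $\zeta'(\rho)^{-1}$.
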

\addtocounter{thm}{-1}
}

\begin{proof}
Under the Riemann hypothesis, Proposition \ref{main equation} gives us \[x^{-1/2}R_{k-1}(x) = \sum_{\rho} \frac{\zeta(\rho-1)}{\zeta'(\rho)\rho^{k}}x^{i\gamma} + o(x),\] so when $k \geq 3$, we will show that this sum converges, and when $k = 2$, we show that this sum diverges under the Linear Independence hypothesis.

First suppose $k = 2$. As we saw in the proof of Proposition \ref{main equation}, we have $\zeta(\rho-1) \asymp \gamma$ for any nontrivial zero $\rho$. It follows that, under the assumption of Linear Independence, \[\limsup_{x \to \infty} \left|\sum_{|\gamma| < T} \frac{\zeta(\rho-1)}{\zeta'(\rho)\rho^{2}}x^{i\gamma}\right| \gg \sum_{|\gamma| < T} \frac{1}{|\zeta'(\rho)\gamma|},\] and the latter series is known to diverge as $T \to \infty$. 

Now suppose $k \geq 3$. Note that it suffices to show that \[\sum_{\rho} \left|\frac{\zeta(\rho-1)}{\zeta'(\rho)\rho^{k}}\right| < \infty.\] Fix $T > 0$, and consider the sum \begin{align}\label{eqn:cauchyschwarzstuff} \left(\sum_{0 < \gamma < T} \left|\frac{\zeta(\rho - 1)}{\zeta'(\rho)\rho^k}\right|\right)^2 \ll \left(\sum_{0 < \gamma < T} \frac{1}{\gamma^2}\right)\left(\sum_{0 < \gamma < T} \left|\frac{1}{\zeta'(\rho)\gamma^{k - 2}}\right|^2\right),\end{align} which follows from the Cauchy--Schwarz inequality and the fact that $\zeta(\rho - 1) \asymp \gamma$. We show that both sums on the right-hand side of (\ref{eqn:cauchyschwarzstuff}) converge as $T \to \infty$.

For the first sum on the right-hand side of (\ref{eqn:cauchyschwarzstuff}), for each $r \in \mathbb{Z}^{+}$, we have \[\sum_{r < \gamma < r + 1} \frac{1}{\gamma^2} \ll \frac{\log r}{r^2},\] which means \[\sum_{\gamma > 0} \frac{1}{\gamma^2} \ll \sum_{r > 0} \frac{\log r}{r^2} < \infty,\] and the first sum converges as $T \to \infty$. 

On the other hand, for the second sum on the right-hand side of (\ref{eqn:cauchyschwarzstuff}) we have, by partial summation,
\begin{align}
    \sum_{0 < \gamma < T} \frac{1}{|\zeta'(\rho)|^2\gamma^{2(k - 2)}} = \frac{1}{T^{2(k - 2)}}\sum_{0 < \gamma < T} \frac{1}{|\zeta'(\rho)|^2} - \int_{14}^T \sum_{0 < \gamma < t} \frac{1}{|\zeta'(\rho)|^2}\cdot (-2k + 4)t^{-2k + 3}\, dt \label{abel summation}
\end{align} for each $T\geq 14$. The desired bound for the right-hand side of (\ref{abel summation}) then follows directly from the assumption that $J_{-1}(T) \ll T$. \qedhere
\end{proof}

\nocite{*}
\bibliography{smoothedtotienterrordistribution}
\bibliographystyle{plain}

\end{document}